 \definecolor{labelkey}{rgb}{0,0,1}
\newcommand{\RR}{\mathbb{R}}
\newcommand{\e}{\varepsilon}
\newcommand{\Div}{{\rm div}\,}
\newcommand{\Id}{{\rm Id}\,}
\newcommand{\dist}{{\rm dist}}
\newcommand{\loc}{{\rm loc}}
\newcommand{\diam}{{\rm diam}\,}
\newcommand{\pa}{\partial}
\newcommand{\eps}{\varepsilon}
\newcommand{\xmk}{x_m^k}
\newcommand{\mres}{\mathbin{\vrule height 1.6ex depth 0pt width 
0.13ex\vrule height 0.13ex depth 0pt width 1.3ex}}
\newcommand{\hn}{\mathcal{H}^{n-1}}
\renewcommand{\leq}{\leqslant}
\renewcommand{\geq}{\geqslant}
\newcommand{\xupref}[2]{\hspace{-0.3ex}\stackrel{\eqref{#1}}{#2}} 
\theoremstyle{plain}
\newtheorem{theorem}{Theorem}[section]
\newtheorem{lemma}[theorem]{Lemma}
\newtheorem{corollary}[theorem]{Corollary}
\newtheorem{proposition}[theorem]{Proposition}
\newtheorem*{theorem*}{Theorem}
\newtheorem*{corollary*}{Corollary}
\theoremstyle{definition}
\newtheorem{definition}{Definition}
\newtheorem{remark}[theorem]{Remark}
\newtheorem*{notation*}{Notation}
\numberwithin{equation}{section}
\numberwithin{figure}{section}
\title[Least Wasserstein Distance Between Disjoint Shapes]{Least Wasserstein Distance Between Disjoint Shapes With Perimeter Regularization}
\author{Michael Novack}
\address[Michael Novack]{Department of Mathematics, The University of Texas at Austin, Austin, TX}
\email{michael.novack@austin.utexas.edu}
\author{Ihsan Topaloglu}
\address[Ihsan Topaloglu]{Department of Mathematics and Applied Mathematics, Virginia Commonwealth University, Richmond, VA}
\email{iatopaloglu@vcu.edu}
\author{Raghavendra Venkatraman}
\address[Raghavendra Venkatraman]{Courant Institute of Mathematical Sciences, New York University, New York, NY.}
\email{raghav@cims.nyu.edu}
\date{\today}                                        
\subjclass[2020]{49Q10, 49J10, 49Q20, 49A99, 49B99}
\keywords{Nonlocal isoperimetric problem, global existence, Wasserstein distance, perimeter regularization}
\thanks{This is a post-peer-review, pre-copyedit version of an article published in Journal of Functional Analysis. The final
authenticated version is available online at: \url{https://doi.org/10.1016/j.jfa.2022.109732}.}
\begin{document}

\begin{abstract} We prove the existence of global minimizers to the double minimization problem 
\begin{align}\notag
    \inf\Big\{ P(E) + \lambda W_p(\mathcal{L}^n\mres E,\mathcal{L}^n\mres F) \colon |E \cap F| = 0, \, |E| = |F| = 1\Big\},
\end{align}
where $P(E)$ denotes the perimeter of the set $E$, $W_p$ is the $p$-Wasserstein distance between Borel probability measures, and $\lambda > 0$ is arbitrary. The result holds in all space dimensions, for all $p \in [1,\infty),$ and for all positive $\lambda $. This answers a question of Buttazzo, Carlier, and Laborde. 
\end{abstract}

\maketitle

\section{Introduction}\label{sec: intro}
Comparing equal volume \textit{shapes}, i.e., Lebesgue measurable sets $E$ and $F$ in $\mathbb{R}^n$ of equal volume, is a ubiquitous task in numerous applications. From a mathematical standpoint, by identifying shapes with probability measures via their normalized characteristic functions, optimal transportation theory provides natural choices of metrics, the $p$-Wasserstein distances, which metrize the weak convergence of probability measures on compact spaces \cite{Santambrogio}. Indeed, length-minimizing Wasserstein geodesics between equal volume sets, known as displacement interpolants, offer a (length minimizing) path joining the shapes being compared \cite{McCann}.

In this note we investigate the role of perimeter regularization in variational problems involving the Wasserstein distance between equal volume sets. As we subsequently discuss, examples of this type of problem arise in different applications. Our principal goal in this paper is to show that such perimeter-regularized variational problems, even when posed on all of space, \textit{do not suffer a loss of compactness of minimizing sequences}. In order to focus on the technical essence in the simplest possible setting while capturing the main difficulties, we consider the following problem:
\begin{equation}\label{the functional}
    \inf \Big\{ P(E) + \lambda W_p({\mathcal{L}^n \mres} E,{\mathcal{L}^n \mres}F) \colon  E, F \subset \RR^n, \,|E\cap F|=0, \, |E|=|F|=1 \Big\}.
\end{equation}
Here $P(E)$ denotes the perimeter of $E\subset\RR^n$, $W_p$ denotes the $p$-Wasserstein distance {on the space of probability measures}, and $\lambda > 0$ is a constant. { The parameter $p$ belongs to the interval $[1,\infty)$, and} {$\lambda$ represents the strength of the Wasserstein term relative to perimeter.} 

{The problem \eqref{the functional} was recently analyzed by Buttazzo, Carlier and Laborde in \cite{Butt}, in addition to more general minimization problems involving the minimal Wasserstein distance between a measure $\mu$ and measures singular with respect to $\mu$.} In the context of \eqref{the functional}, the authors in \cite{Butt} show, for any $\lambda>0$, the existence of minimizers when admissible sets $E$ and $F$ are required to be subsets of a bounded domain $\Omega$. { In two dimensions, they prove the existence of minimizers for the problem \eqref{the functional} on all of $\mathbb{R}^2$, and conjectured that it should hold in all dimensions.} This {whole-space} result was extended by Xia and Zhou \cite{XZ21} to higher dimensions but under the additional assumptions that $\lambda$ is sufficiently small and that $p<n/(n-2)$. In our result we lift all these restrictions and obtain that minimizers exist in any dimension \emph{and} for all values of $\lambda>0$ and $p \in [1,\infty)$, thereby completely answering the conjecture of Buttazzo, Carlier and Laborde. Precisely, we prove the following theorem:
\medskip

\begin{theorem}[Existence]\label{existence theorem}
For any $\lambda>0$ and $p\in [1,\infty)$, there exists a minimizing pair $(E,F)$ to the problem \eqref{the functional}.
\end{theorem}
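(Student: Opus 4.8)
The plan is to apply the direct method, the entire difficulty being to rule out loss of mass at infinity for minimizing sequences. Fix $\lambda>0$ and $p\in[1,\infty)$, let $\mathcal{F}_0$ denote the infimum in \eqref{the functional}, and let $(E_k,F_k)$ be a minimizing sequence. Testing with an explicit competitor (say, two disjoint unit-volume balls) shows $\mathcal{F}_0<\infty$, so for large $k$ we have $P(E_k)\le\mathcal{F}_0+1=:\Lambda$ and hence $W_p(\mathcal{L}^n\mres E_k,\mathcal{L}^n\mres F_k)\le\Lambda/\lambda$. Since $|E_k|=1$ and $P(E_k)\le\Lambda$, a standard nonvanishing fact for sets of finite perimeter produces $r_0,c_0>0$ and points $x_k$ with $|E_k\cap B_{r_0}(x_k)|\ge c_0$; translating $(E_k,F_k)$ by $-x_k$, which alters neither the constraints nor the energy, we may take $x_k=0$. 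By compactness in $BV_{\mathrm{loc}}(\RR^n)$, along a subsequence $\mathbf{1}_{E_k}\to\mathbf{1}_E$ in $L^1_{\mathrm{loc}}(\RR^n)$ with $c_0\le|E|\le1$ and $P(E)\le\liminf_k P(E_k)$. The bound on $W_p$ forces tightness and uniform $p$-th moments of $\{\mathcal{L}^n\mres F_k\}$ — mass of $F_k$ at distance $\rho$ from a fixed ball carrying most of $E_k$ contributes at least a multiple of $\rho^p$ to $W_p^p$ — so, passing to a further subsequence, $\mathcal{L}^n\mres F_k\weakstar\mu$ with $\mu=\rho_\infty\mathcal{L}^n$, $0\le\rho_\infty\le1$, $\mu(\RR^n)\le1$; and since $\mathbf{1}_{E_k}\mathbf{1}_{F_k}\equiv0$ while $\mathbf{1}_{E_k}\to\mathbf{1}_E$ in $L^1_{\mathrm{loc}}$, in the limit $\rho_\infty=0$ a.e.\ on $E$.

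The crux is to upgrade this to $|E|=1$, i.e.\ $\mu(\RR^n)=1$. Suppose instead $|E|=1-m$ with $m\in(0,1]$, so a fixed fraction of the mass of $E_k$, and correspondingly — through near-optimal transport plans $\pi_k$ — of $F_k$, escapes to infinity. A slicing argument produces radii $R_k\to\infty$ for which both $\mathcal{H}^{n-1}(E_k\cap\partial B_{R_k})$ and the $\pi_k$-mass of pairs $(x,y)$ separated by $\partial B_{R_k}$ are negligible; cutting at $\partial B_{R_k}$ then splits $(E_k,F_k)$, up to $o(1)$ errors, into an ``inner'' admissible pair of volume $\to1-m$ and an ``outer'' admissible pair of volume $\to m$, whose perimeters add up to $P(E_k)+o(1)$ (additivity of perimeter under the cut, the interface being negligible) and whose transport costs add up to at most $W_p^p(\mathcal{L}^n\mres E_k,\mathcal{L}^n\mres F_k)+o(1)$ (restriction of $\pi_k$). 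Writing $\Psi(v)$ for the infimum of the functional among admissible pairs of total volume $v$ (with the second component allowed to be a subdensity of $\mathcal{L}^n$, which one checks does not change the value), this yields $\mathcal{F}_0=\Psi(1)\ge\Psi(1-m')+\Psi(m')-o(1)$ for some $m'\to m$, and the contradiction will come from the \emph{strict subadditivity} $\Psi(1)<\Psi(a)+\Psi(b)$ whenever $a,b>0$, $a+b=1$. Here one uses that rescaling a competitor of volume $1$ to volume $v$ multiplies the perimeter by $v^{(n-1)/n}$ and the Wasserstein term by $v^{1/n+1/p}$; combining this homogeneity with the strict concavity of $t\mapsto t^{(n-1)/n}$ and the isoperimetric inequality gives strict subadditivity when neither piece is small, while when one piece is small it is more efficient to simply truncate it off and reabsorb its volume as a thin layer on the bulk, the isoperimetric gain $\sim (m')^{(n-1)/n}$ from deleting a set of volume $m'$ then beating the $O(m')$ cost of the layer. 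Carrying this out uniformly over all $a,b$ and, crucially, over \emph{all} $\lambda>0$ and $p\in[1,\infty)$ is the main obstacle; this is precisely the point at which \cite{Butt} and \cite{XZ21} were forced to restrict to $n=2$, or to small $\lambda$ and $p<n/(n-2)$, and removing these restrictions requires a sharper version of the cutting-and-reabsorption construction.

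With $|E|=1$ established, $\mathbf{1}_{E_k}\to\mathbf{1}_E$ in $L^1(\RR^n)$ (no mass escapes, since $|E_k|=|E|=1$) and $\mathcal{L}^n\mres F_k\weakstar\mu$ with $\mu$ a probability measure satisfying $\mu\le\mathcal{L}^n$ and $\mu(E)=0$; as the $\mathcal{L}^n\mres F_k$ have uniformly bounded $p$-th moments, $W_p(\mathcal{L}^n\mres E_k,\mathcal{L}^n\mres F_k)\to W_p(\mathcal{L}^n\mres E,\mu)$, and with lower semicontinuity of the perimeter we get $P(E)+\lambda W_p(\mathcal{L}^n\mres E,\mu)\le\mathcal{F}_0$. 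Finally, a bathtub-principle/rearrangement argument — of the type already used in \cite{Butt} for the auxiliary problem of minimizing $W_p(\mathcal{L}^n\mres E,\,\cdot\,)$ over measures singular with respect to $\mathcal{L}^n\mres E$ — shows that among probability measures $\nu\le\mathcal{L}^n$ with $\nu(E)=0$ the quantity $W_p(\mathcal{L}^n\mres E,\nu)$ is minimized at $\nu=\mathcal{L}^n\mres F$ for some set $F$ with $|F|=1$, $|E\cap F|=0$; in particular $W_p(\mathcal{L}^n\mres E,\mathcal{L}^n\mres F)\le W_p(\mathcal{L}^n\mres E,\mu)$, so $(E,F)$ is an admissible pair with $P(E)+\lambda W_p(\mathcal{L}^n\mres E,\mathcal{L}^n\mres F)\le\mathcal{F}_0$, hence a minimizing pair.
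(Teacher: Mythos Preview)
Your outline is the standard concentration-compactness scheme, and you have been admirably honest about where it breaks: the step ``strict subadditivity $\Psi(1)<\Psi(a)+\Psi(b)$ uniformly in $a,b,\lambda,p$'' is asserted, not proved. You even write that this ``is the main obstacle'' and that ``removing these restrictions requires a sharper version of the cutting-and-reabsorption construction'' --- but no such sharper construction is supplied. As stated, then, the proposal has a genuine gap at exactly the point that distinguishes the present theorem from the partial results in \cite{Butt} and \cite{XZ21}. The heuristic you give (isoperimetric gain $\sim (m')^{(n-1)/n}$ from deleting a small piece beats the $O(m')$ cost of reabsorbing it) is plausible for the perimeter, but you have not controlled what happens to the Wasserstein term under the reabsorption, nor dealt with the regime where neither piece is small; the mixed scaling $v^{(n-1)/n}$ versus $v^{1/n+1/p}$ does not combine into a single concave profile, and the balance between the two terms in the minimizer at volume $v$ is unknown. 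A secondary issue: your early tightness claim for $\{\mathcal{L}^n\mres F_k\}$ is circular as written, since mass of $F_k$ can sit far from the origin at no Wasserstein cost provided mass of $E_k$ sits nearby --- and tightness of $E_k$ is precisely what is at stake.

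The paper avoids the subadditivity bottleneck entirely. Rather than splitting and comparing energies, it shows (via a volume-fixing variation argument and the $L^1$-Lipschitz control of $\mathcal{W}_p$ in \Cref{L1 control of script W}) that the elements $E_m$ of the minimizing sequence are themselves almost $(\Lambda,r_0)$-perimeter minimizers up to an error $1/m$. This yields uniform lower density estimates for \emph{any} $L^1_{\mathrm{loc}}$-limit of translates of $E_m$, hence each such limit is bounded. A second application of Almgren's nucleation lemma then rules out volume loss directly: if mass escaped, it would have to accumulate into a new bounded piece of volume at least $C_n r_1^n$, contradicting the nucleation bound on the residual mass. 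No energy comparison between pieces, and hence no subadditivity, is needed. This mechanism is what makes the argument uniform in $\lambda$, $p$, and $n$.
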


\medskip{
The proof of \Cref{existence theorem} is based on tools developed in the context of constrained geometric variational problems on all of space for which symmetrization principles cannot rule out loss of volume at infinity for a minimizing sequence. First, for a minimizing sequence $\{E_m, F_m\}$, the nucleation lemma of Almgren \cite[VI.13]{Alm76} yields a finite number of bounded ``chunks" which contain most of the volume. Then, classical density arguments for constrained perimeter minimizers allow one to argue that the minimizing sequence is essentially confined to finitely many (potentially diverging) balls on which there is no volume loss, at which point lower-semicontinuity of the energy yields the existence of a minimizing pair.

Nonlocal isoperimetric problems are well-studied and consist of minimizing the perimeter functional with some additional nonlocal term that precludes coalescence of sets.} The problem \eqref{the functional} has several interesting mathematical features and exhibits both similarities and differences to other nonlocal isoperimetric models. The behavior of \eqref{the functional} is driven by the competition between the perimeter term and the Wasserstein term. There is an inherent frustration between the two, due the fact that while there exists sequences $\{(E_m, F_m)\}$ of admissible sets to \eqref{the functional} such that $W_p(E_m, F_m)\to 0$, any such sequence necessarily has perimeters approaching infinity, cf. \Cref{vanishing wasserstein implies diverging perimeters}. However, a crucial feature is that the construction of such a sequence can be achieved within a \textit{bounded} set. This is one reason why we are able to prove the existence of minimizers in all parameter regimes, which does not hold for some other examples of perimeter energies perturbed by a nonlocal term. As we recall presently, the celebrated liquid drop model of Gamow displays non-existence phenomena in certain parameter regimes. 

A classical example of a nonlocal isoperimetric problem is the liquid drop model of Gamow (see \cite{Ga1930}),
    \[
        \inf\left\{ P(E) + \int_E \! \int_E |x-y|^{-\alpha} \, dxdy \colon |E|=M \right\},
    \]
where the nonlocal term is given by Riesz-type interactions. Here the two terms present in the energy functional (perimeter and nonlocal interactions) are in direct competition, as in \eqref{the functional}. The surface energy is minimized by a ball whereas the repulsive term prefers to disperse the mass into vanishing components diverging infinitely apart. The parameter of the problem, that is $M$, sets a length scale between these competing forces (see \cite{ChoMurTop} for a review).

There are two major differences between the problem \eqref{the functional} and the aforementioned one:
    \begin{itemize}
        \item The nonlocal isoperimetric problems considered in the literature involve the minimization of functionals over \emph{single} sets of finite perimeter with a volume constraint. The energy functional in \eqref{the functional}, on the other hand, is minimized over a pair of disjoint sets of finite perimeter of equal volume. A similar phenomenon appears in ternary systems (involving both interfacial energy and nonlocal pairwise interactions) with three different phases, where two of which interact via long-range Riesz-type potentials (see \cite{BonKnu}).
        \item Perhaps the most important distinction between \eqref{the functional} and the nonlocal isoperimetric problems studied in the literature is that in our case a minimizing sequence for the repulsive nonlocal term (the Wasserstein distance) does not necessarily consist of vanishing components that are diverging away to infinity (cf. \cite{AlaBroChoTop,KnuMurNov}). Rather, in some sense, it prefers oscillations reminiscent of nonexistence of minimizers in shape optimization problems via nonlocal attractive-repulsive interactions in models of swarming \cite{BurChoTop,FraLie}.
    \end{itemize}
 An important manifestation of these differences is that, as shown by Kn\"upfer and Muratov \cite{KM}, in Gamow's model minimizers \textit{fail to exist} for values of the mass constraint that are larger than a critical value of $M$ and for $\alpha\in(0,2)$ (see also \cite{LO,FKN16} for the physically relevant case of $n=3$, $\alpha=1$, and \cite{FN21} for a newer proof in the general case). This is in striking contrast with our main result for \eqref{the functional}.

\medskip
Let us briefly discuss some of the mathematical literature related to \eqref{the functional}, as it arises in various applications. First, geometric variational problems with a Wasserstein term are useful in the modelling of bilayer membranes. In \cite{PelRog}, Peletier and R\"oger derived the energy
\begin{equation}\label{lipid bilayer model}
     P(E) + \e^{-2} W_1(\mathcal{L}^n \mres E,\mathcal{L}^n \mres F) \quad\textup{for }  E, F \subset \RR^n, \,|E\cap F|=0, \, |E|=|F|=\e ,
\end{equation}
as a simplified model for lipid bilayer membranes. Here the sets $E$ and $F$ represent the densities of the hydrophobic tails and hydrophilic heads, respectively, of the two part lipid molecules. The perimeter term signifies an interfacial energy arising from hydrophobic effects, while the Wasserstein term is a weak remainder of the bonding between the head and tail particles. The authors in \cite{LusPelRog,PelRog} considered the asymptotic expansion as $\e\to 0$ of the energy in $\mathbb{R}^2$ and $\mathbb{R}^3$ and identified a limiting energy concentrated on a codimension one set. The competition described earlier between the two terms in the energy drives the system toward \textit{partially localized} structures that are thin in one direction ($\sim\e$) and extended in the remaining directions. Since \eqref{lipid bilayer model} is equivalent to \eqref{the functional} up to rescaling and choosing the correct $\lambda=\lambda(\e)$, our existence theorem applies to \eqref{lipid bilayer model}. Nonlocal isoperimetric problems (mostly related to models of diblock copolymers) where the perimeter functional is perturbed by a nonlocal term involving the 2-Wasserstein distance have also appeared elsewhere in the literature (cf. \cite{BouPelRop,PelVen}).

In a completely different line of research, in the recent article \cite{LPS}, Liu, Pego, and Slep\v{c}ev study incompressible flows between equal volume shapes, as critical points for action, given by kinetic energy along transport paths that are constrained to be characteristic function densities. Formally, viewing the space of equal volume shapes as an infinite dimensional ``manifold'', the  critical points for action are geodesics -- they verify incompressible Euler equations for an inviscid potential flow with zero pressure, and \textit{zero surface tension along free boundaries}. The authors in \cite{LPS} find that, in particular, locally minimizing action exhibits an instability associated with microdroplet formation. They show that any two shapes of equal volume can be approximately connected by what they refer to as an ``Euler spray'', a countable superposition of ellipsoidal geodesics. Furthermore, associated with the aforementioned instability, the infimum of action, which is equal to the squared $2$-Wasserstein distance, is not attained. 

Unlike \cite{LPS}, we do not focus on \textit{paths} joining shapes -- investigating the role of surface tension in alleviating the microdroplet instability alluded to above is an interesting research direction that we hope to pursue elsewhere. For now we simply note that in the absence of the perimeter regularization in \eqref{the functional}, minimizing sequences disintegrate into tiny ``microdroplets'', driving the minimum energy to zero, a form of microdroplet instability (see \Cref{vanishing wasserstein implies diverging perimeters}). We believe our technical contributions precluding the loss of compactness via microdroplet formation will be useful in studying the effect of including surface tension in \cite{LPS}.

Finally, we mention some future directions and questions that remain regarding \eqref{the functional}. While the one-dimensional calculations in \cite[Example 4.4]{Butt} determine the minimizers depending on $\lambda$ explicitly, the characterization of minimizers for any $\lambda>0$ in higher dimensions remains an open problem. 

\medskip

Shortly after submission of the present article, Candau-Tilh and Goldman uploaded a preprint on arXiv which studies the same minimization problem (see \cite{C-TG22}). They obtain the existence of minimizers via an alternative argument. They also characterize global minimizers in the small $\lambda$ regime, partially answering a question left open in our paper.

\section{Notation and Preliminaries}\label{sec: preliminaries}
We introduce some notation that we will use throughout the paper. Let $B(x,r)$ denote the open ball in $\RR^n$ centered at $x$ with radius $r$, and let $\omega_n := |B(0,1)|$. For any Lebesgue measurable set $E \subset \RR^n$, $|E|$ is the Lebesgue measure of $E$. Finally, we use uppercase $C_n$, $C_p$, and $C_{n,p}$ to refer to constants that depend on one or both of the spatial dimension $n$ and $p\in [1, \infty)$. The values of these constants may change from line to line. An exception to this convention is \Cref{nucleation lemma}, so we denote the dimensional constant appearing there by the lowercase $c(n)$.

We work within the setting of sets of finite perimeter in $\mathbb{R}^n$ (see e.g. \cite{Maggi}). Given a Lebesgue measurable set $E \subset \RR^n$ we use the perimeter functional in the sense of De Giorgi, defined by 
\begin{align*}
    P(E) := \sup\left\{ \int_{E} \mathrm{div}\, V(x)\,dx \colon V \in C_c^1(\RR^n;\mathbb{R}^n), |V| \leqslant 1 \right\}.
\end{align*}
This notion of perimeter possesses properties such as lower-semicontinuity under $L^1_\loc$-convergence, which is immediate from the definition, and compactness. 

In the sequel, we will need the following nucleation lemma, due to Almgren \cite[VI.13]{Alm76} and quoted from \cite[Lemma 29.10]{Maggi}.
\begin{lemma}[Nucleation]\label{nucleation lemma}
For every $n\geq 2$, there exists a positive constant $c(n)$ with the following property. If $E$ is of finite perimeter, $0 < |E| < \infty$, and
\begin{equation}\notag
    \eps \leq \min \left\{|E|, \frac{P(E)}{2nc(n)} \right\},
\end{equation}
then there exists a finite family of points $x_i \subset \mathbb{R}^n$, $1\leq i \leq I$ such that
\begin{equation}\notag
    \left|E \setminus \bigcup_{1\leq i\leq I} B(x_i,2) \right| < \eps, 
\end{equation}
\begin{equation}\label{lower bound on i ball}
    \left|E\cap B(x_i,1) \right| \geq \left(c(n) \frac{\eps}{P(E)} \right)^n.
\end{equation}
Moreover, $|x_i-x_{i'}|>2$ for every $i\neq i'$, and
\begin{equation}\label{i bound}
      I < |E| \left(\frac{P(E)}{c(n)\eps} \right)^n.
\end{equation}
\end{lemma}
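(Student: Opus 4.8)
The plan is to produce the ``chunks'' directly, \emph{without} iteratively removing balls from $E$: each such excision can raise the perimeter, so after several steps one can no longer compare the perimeter of the residual set to the \emph{fixed} number $P(E)$ on which all the estimates rest. Write $\delta:=\bigl(c(n)\eps/P(E)\bigr)^n$, where $c(n)$ is a small dimensional constant to be pinned down; the hypothesis $\eps\le P(E)/(2nc(n))$ then forces $\delta\le(2n)^{-n}<\omega_n/2$, so the relative isoperimetric inequality may be applied to $E$ in any unit ball in which $E$ has mass $\le\delta$. The only external input — and the source of the restriction $n\ge2$ — is the dimensional estimate $|E\cap B(x,r)|^{(n-1)/n}\le c_0(n)\,P(E;B(x,r))$, valid whenever $|E\cap B(x,r)|\le\tfrac12|B(x,r)|$ (see e.g.\ \cite{Maggi}). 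Two applications of this estimate, each against a cover of bounded overlap $N(n)$, do everything. First, covering $\RR^n$ by a fixed lattice of unit balls and summing yields the density bound $\sup_x|E\cap B(x,1)|\ge\min\{\omega_n/2,\,(|E|/(c_0(n)N(n)P(E)))^n\}$; choosing $c(n)<1/(c_0(n)N(n))$ makes this $>\delta$, so $A:=\{x:|E\cap B(x,1)|\ge\delta\}$ contains a point where $|E\cap B(\cdot,1)|$ exceeds $\delta$ (the supremum being attained since $x\mapsto|E\cap B(x,1)|$ is continuous and vanishes at infinity).

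\emph{Step 1: the diffuse part of $E$ is negligible.} Let $E^{(1)}$ be the set of density-one points of $E$ and $E_{\mathrm{low}}:=\{x\in E^{(1)}:|E\cap B(x,1)|<\delta\}$. By the Besicovitch covering theorem there is a countable cover $\{B(y_k,1)\}_k$ of $E_{\mathrm{low}}$ with centers $y_k\in E_{\mathrm{low}}$ and overlap $\le N(n)$. Since $|E\cap B(y_k,1)|<\delta<\omega_n/2$, the relative isoperimetric inequality gives $|E\cap B(y_k,1)|\le c_0(n)\,\delta^{1/n}\,P(E;B(y_k,1))$; summing in $k$ and using $\sum_kP(E;B(y_k,1))\le N(n)P(E)$ yields $|E_{\mathrm{low}}|\le c_0(n)N(n)\,\delta^{1/n}P(E)=c_0(n)N(n)c(n)\,\eps<\eps$, the last inequality being strict by the choice $c(n)<1/(c_0(n)N(n))$.

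\emph{Steps 2--3: the centers and the conclusions.} Fix $x_1$ with $|E\cap B(x_1,1)|=\sup_x|E\cap B(x,1)|>\delta$, and extend $\{x_1\}$ to a family $\{x_1,\dots,x_I\}\subset A$ that is \emph{maximal} subject to $|x_i-x_{i'}|>2$ for $i\ne i'$. The family is finite because the open balls $B(x_i,1)$ are then pairwise disjoint with $|E\cap B(x_i,1)|\ge\delta$ each; moreover, since $|E\cap B(x_1,1)|>\delta$ we get $I\delta<\sum_i|E\cap B(x_i,1)|\le|E|$, hence $I<|E|/\delta=|E|(P(E)/(c(n)\eps))^n$, which is \eqref{i bound}. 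The bound \eqref{lower bound on i ball} and the separation $|x_i-x_{i'}|>2$ hold by construction. Finally, maximality forces every $x\in A$ to lie within distance $2$ of some $x_i$ (otherwise $\{x_1,\dots,x_I,x\}$ would still be admissible), so $A\subset\bigcup_i\overline{B(x_i,2)}$; since $\bigcup_i\partial B(x_i,2)$ is Lebesgue-null we have $|E\setminus\bigcup_iB(x_i,2)|=|E\setminus\bigcup_i\overline{B(x_i,2)}|$, and $E\setminus\bigcup_i\overline{B(x_i,2)}\subset E\setminus A\subset E_{\mathrm{low}}$ modulo Lebesgue-null sets, whence $|E\setminus\bigcup_iB(x_i,2)|\le|E_{\mathrm{low}}|<\eps$.

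The essential step — and the one requiring care — is Step 1: one must convert the relative isoperimetric inequality into the statement that the portion of $E$ around which $E$ is ``thin'' (small unit-ball mass) has small total volume, and this works precisely because it is tested against the \emph{fixed} perimeter $P(E)$ through a cover of bounded overlap. Once Step 1 is in hand the rest is bookkeeping: the combinatorial structure of the chunks comes from a maximal $2$-separated subset of the high-density set $A$, so there is no iteration and hence no uncontrolled growth of the perimeter to worry about.
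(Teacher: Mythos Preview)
The paper does not prove this lemma; it is quoted verbatim from \cite[Lemma 29.10]{Maggi} (originating with Almgren), so there is no in-paper proof to compare against. Your argument is correct and is essentially the standard one: the relative isoperimetric inequality on a Besicovitch cover bounds the measure of the low-density set $E_{\mathrm{low}}$ by $c_0(n)N(n)\,\delta^{1/n}P(E)<\eps$, and a maximal $2$-separated subset of the high-density set $A$ furnishes the centers with the required bounds on $I$ and on $|E\cap B(x_i,1)|$. Your emphasis on avoiding iterative ball-removal is well placed, and the choice $c(n)<1/(c_0(n)N(n))$ simultaneously secures the strict inequality $\sup_x|E\cap B(x,1)|>\delta$ (needed for the strict bound on $I$) and the strict inequality $|E_{\mathrm{low}}|<\eps$.
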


\medskip

\begin{remark}[Nucleation/compactness]\label{nucleation/compactness remark}
We will often employ the nucleation lemma, in particular the conclusion \eqref{lower bound on i ball}, in conjunction with the compactness theorem for sets of finite perimeter (cf. for example \cite[Corollary 12.27]{Maggi}) to obtain a positive measure subsequential $L^1_\loc$-limit of a suitable sequence $\{E_m\}$. Precisely, if $\{E_m\}$ is a sequence of sets of finite perimeter satisfying
\begin{equation}\notag 
    0<\varepsilon :=\inf_{m} \, \min\left\{ |E_m|, \frac{P(E_m)}{2nc(n)}\right\},\qquad \sup_{m} P(E_m) < \infty,
\end{equation}
then, up to extraction of a non-relabeled subsequence, there exists a non-empty set $E$ and sequence $\{x_m\} \subset \mathbb{R}^n$ such that $(E_m-x_m) \overset{\loc}{\to} E$ and
\begin{equation}\label{volume of E on B1}
    \left|E\cap B(0,1) \right| \geq \left(c(n) \frac{\eps}{\sup P(E_m)} \right)^n>0.
\end{equation}
Here the local convergence for sets is the strong $L^1_\loc$ convergence of the corresponding characteristic functions. We remark that this compactness property has also been obtained by Frank and Lieb in \cite{FL15} using different arguments.  
\end{remark}

\bigskip

The next lemma is an amalgamation of several standard arguments \cite[Lemmas 17.21 and 17.9]{Maggi}. It allows for comparison of the energies of a minimizing sequence against local variations which do not necessarily preserve the volume constraint, cf. \eqref{almost perimeter minimality of E_m}, which is useful in the derivation of density estimates for example. For convenience we include the proof of this lemma in the appendix.

\begin{lemma}[Volume-fixing variations along a sequence]\label{volume fixing lemma}
Let $E$ be a set of finite perimeter and $A$ be an open set such that $\hn(\pa^\ast E \cap A)>0$. Suppose also that $\{E_m\}$ satisfy
\begin{equation}\notag
    \sup P(E_m;A) \leq M < \infty
\end{equation}
and $E_m \overset{\loc}{\to }E$ in $\mathbb{R}^n$. Then there exists $\sigma_0 = \sigma_0(E,A,M)>0$ and $C_0=C_0(E,A,M)<\infty$ such that for every $\sigma \in (-\sigma_0, \sigma_0)$ and large enough $m$ there exist sets of finite perimeter $G_m$ with $G_m \triangle E_m\subset\!\subset A$ and
\begin{gather}\label{volume fixing equation}
    |G_m \cap A| = |E_m\cap A| + \sigma, \\ \label{L1 and perimeter bounds}
    |G_m \triangle E_m| \leq C_0|\sigma|, \quad\text{and}\quad|P(G_m;A)- P(E_m;A)| \leq C_0|\sigma|.
\end{gather}
\end{lemma}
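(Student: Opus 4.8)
The plan is to adapt the single‑set volume‑fixing construction of \cite[Lemmas 17.9 and 17.21]{Maggi} to the sequence by generating, once and for all, a flow from the reduced boundary of the \emph{limit} set $E$ and then applying it to each $E_m$; the bounds \eqref{volume fixing equation}--\eqref{L1 and perimeter bounds} will then come from the uniform (in $m$) properties of that one fixed flow. Concretely, since $\hn(\partial^\ast E\cap A)>0$ the Gauss--Green measure of $E$ restricted to $A$ is a nonzero $\RR^n$‑valued Radon measure, so there exists $T\in C_c^\infty(A;\RR^n)$ with
\[
    2a \;:=\; \int_E \mathrm{div}\,T\,dx \;=\; \int_{\partial^\ast E} T\cdot\nu_E\,d\hn \;\neq\; 0,
\]
and after replacing $T$ by $-T$ we may assume $a>0$. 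Let $B\subset\subset A$ be an open ball with $\spt T\subset B$, and let $\{f_t\}_{t\in\RR}$ be the flow of $T$. Each $f_t$ is a smooth diffeomorphism equal to the identity off $\spt T$, so $f_t(B)=B$ and $f_t(G)\triangle G\subset\spt T\subset\subset A$ for every measurable $G$; in particular every candidate $G_m:=f_t(E_m)$ automatically satisfies $G_m\triangle E_m\subset\subset A$.

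Next I would show that the volume defect
\[
    g_m(t) \;:=\; |f_t(E_m)\cap A|-|E_m\cap A| \;=\; \int_{E_m\cap B}\big(\det\nabla f_t-1\big)\,dx
\]
(the second equality because $f_t$ fixes $A\setminus B$ and maps $B$ onto itself) is, for all large $m$, a local diffeomorphism near $t=0$ with estimates independent of $m$. Differentiating the flow equation gives $\det\nabla f_t(x)=1+t\,\mathrm{div}\,T(x)+\rho(t,x)$ with $|\rho(t,x)|\le C_1t^2$ and $|\partial_t\rho(t,x)|\le C_1|t|$ uniformly in $x$ for $|t|\le1$, with $C_1$ depending only on $T$. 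Hence $g_m(t)=c_mt+r_m(t)$, where $c_m:=\int_{E_m\cap B}\mathrm{div}\,T\,dx$ and $|r_m(t)|\le C_1|B|t^2$, $|r_m'(t)|\le C_1|B||t|$. Since $\mathbf 1_{E_m}\to\mathbf 1_E$ in $L^1(\overline B)$ we have $c_m\to 2a>0$, so $c_m\ge a$ for all $m\ge m_0$; fixing $t_1=t_1(E,A)\in(0,1]$ with $C_1|B|t_1\le a/2$ yields, for every $m\ge m_0$, that $g_m$ is strictly increasing on $[-t_1,t_1]$, that $|g_m(t)|\ge\tfrac a2|t|$ there, and hence that $g_m$ maps $[-t_1,t_1]$ homeomorphically onto an interval containing $(-\sigma_0,\sigma_0)$ with $\sigma_0:=\tfrac a2 t_1$.

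To conclude, given $\sigma\in(-\sigma_0,\sigma_0)$ and $m\ge m_0$, set $t:=g_m^{-1}(\sigma)$, so $|t|\le\tfrac2a|\sigma|$, and define $G_m:=f_t(E_m)$. Then $G_m\triangle E_m\subset\subset A$, and $|G_m\cap A|=|E_m\cap A|+g_m(t)=|E_m\cap A|+\sigma$, which is \eqref{volume fixing equation}. For \eqref{L1 and perimeter bounds} I would invoke the standard quantitative estimates for a fixed flow acting on finite‑perimeter sets (as in the proofs of \cite[Lemmas 17.9 and 17.21]{Maggi}), namely $|f_t(E_m)\triangle E_m|\le C_2|t|\,P(E_m;B)$ and $|P(f_t(E_m);A)-P(E_m;A)|\le C_2|t|\,P(E_m;A)$ with $C_2=C_2(T)$; since $P(E_m;B)\le P(E_m;A)\le M$ and $|t|\le\tfrac2a|\sigma|$, both are bounded by $\tfrac2a C_2 M|\sigma|$, so \eqref{L1 and perimeter bounds} holds with $C_0:=\tfrac2a C_2 M=C_0(E,A,M)$.

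The single‑set version of this being classical, the only real difficulty is the uniformity in $m$: $\sigma_0$, $C_0$, and the threshold $m_0$ must not depend on $m$. This is precisely why the flow is built from the reduced boundary of the limit $E$ rather than of $E_m$ --- $L^1_\loc$‑convergence then transfers the nondegeneracy $c_m\to 2a\neq0$ to all large $m$, while every quantitative estimate above involves only the fixed data $T$, $B$ and the uniform bound $M$, and never $E_m$ itself.
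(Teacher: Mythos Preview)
Your proposal is correct and follows essentially the same approach as the paper's proof: both construct a fixed vector field $T\in C_c^\infty(A;\RR^n)$ from the reduced boundary of the limit set $E$, use $L^1_\loc$-convergence to transfer the nondegeneracy $\int_E\mathrm{div}\,T>0$ to $\int_{E_m}\mathrm{div}\,T$ for large $m$, and then apply the flow of $T$ to each $E_m$, inverting the volume map and bounding the perimeter change and symmetric difference via the uniform bound $M$ and the first-variation estimates of \cite[Lemmas~17.9, 17.21]{Maggi}. One cosmetic point: you take $B$ to be an open \emph{ball} with $\spt T\subset B\subset\!\subset A$, which need not exist for general open $A$; replacing ``ball'' by ``bounded open set'' fixes this without affecting anything else.
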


\bigskip

We turn to recalling notions from optimal transport that we use throughout the paper (see \cite{Ambrosio,Santambrogio, Villanitopics,Villanioldandnew} for further details). The family of finite, positive Borel measures on $\mathbb{R}^n$ is denoted by $\mathcal{M}_+(\mathbb{R}^n)$. We work with this class instead of the usual space of Borel probability measures since it will sometimes be useful to have a notion of transport between measures of equal mass other than 1; this of course entails no significant change in the theory. Given $\mu$, $\nu\in \mathcal{M}_+(\mathbb{R}^n)$ with $\mu(\RR^n) = \nu(\RR^n)$, we let $\Pi(\mu,\nu)$ be the set of all couplings between $\mu$ and $\nu$:
\begin{align*}
    \Pi(\mu,\nu) := \left\{ \gamma \in \mathcal{M}_+(\RR^n \times \RR^n) \colon (\pi_1)_\# \gamma =\mu, \, (\pi_2)_\# \gamma = \nu \right\},
\end{align*}
where $\#$ is the push-forward operation, and $\pi_1$, $\pi_2$ respectively denote projections onto the first and second copies of $\RR^n$. A transport map from $\mu$ to $\nu$ is a map $T\colon\RR^n \to \RR^n$ such that $T_\# \mu = \nu$. Any such $T$ induces a coupling $\gamma$ via the relation $\gamma = (\Id\times T)_\# \mu$. When $\mu= \mathcal{L}^n\mres E$ and $\nu=\mathcal{L}^n\mres F$, we will refer to $T$ as transporting $E$ to $F$.

Kantorovich's problem with cost $c(x,y)$ for measures $\mu$, $\nu$ of equal total mass is
\begin{equation}\notag
\textbf{K}_c(\mu,\nu)=\inf\left\{ \int_{\RR^n \times \RR^n} c(x,y) \,d\gamma(x,y) \colon \gamma \in \Pi(\mu,\nu)\right\}.
\end{equation}
Since we are interested in the case where $\mu$ and $\nu$ are the restrictions of Lebesgue measure to two subsets of $\mathbb{R}^n$ and the cost is
\begin{equation}\notag
c_p(x,y) := |x-y|^p,
\end{equation}
the existence of a solution to Kantorovich's problem in this instance is relevant. For stronger versions of this theorem and more comprehensive discussions of the vast mathematical literature on optimal transport, we refer the reader to the monographs mentioned above and the references therein.

\begin{theorem}[Existence of an optimal transport map]\label{existence of optimal transport maps}
Let $p\in [1,\infty)$, and suppose $E$ and $F$ are Lebesgue measurable sets with $|E|=|F|>0$. Then there exists a map $\Phi \colon \mathbb{R}^n\to \mathbb{R}^n$, called an optimal transport map, such that $\Phi_\# (\mathcal{L}^n\mres E) = \mathcal{L}^n\mres F$ and
\begin{equation}\notag
     \int_{E} |x-\Phi(x) |^p \, dx =  \textup{\textbf{K}}_{c_p}(\mathcal{L}^n\mres E,\mathcal{L}^n\mres F).
\end{equation}
\end{theorem}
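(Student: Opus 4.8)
The plan is to establish existence of an optimal transport map for the cost $c_p(x,y) = |x-y|^p$ between $\mathcal{L}^n \mres E$ and $\mathcal{L}^n \mres F$ when $|E| = |F| > 0$. The strategy proceeds in two stages: first obtain an optimal \emph{coupling} (a Kantorovich plan) via the direct method, then upgrade it to a \emph{map} using the absolute continuity of $\mathcal{L}^n \mres E$ together with structure of optimal plans for these costs.

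First I would normalize so that $|E| = |F| = 1$ (the general equal-mass case follows by scaling the measures), so that $\mu := \mathcal{L}^n \mres E$ and $\nu := \mathcal{L}^n \mres F$ are Borel probability measures. The set of couplings $\Pi(\mu,\nu)$ is nonempty (it contains the product $\mu \otimes \nu$) and is tight since $\mu,\nu$ are themselves tight, hence weakly sequentially precompact by Prokhorov; it is also weakly closed because the marginal constraints pass to the limit against bounded continuous test functions. The cost $c_p$ is lower semicontinuous and nonnegative, so $\gamma \mapsto \int c_p \, d\gamma$ is weakly lower semicontinuous on $\Pi(\mu,\nu)$. The infimum $\mathbf{K}_{c_p}(\mu,\nu)$ is finite — for instance the product coupling gives a finite value when $E,F$ are bounded, and in general one can reduce to the bounded case since $E$ and $F$ are sets of finite measure — so the direct method yields an optimal plan $\gamma_0 \in \Pi(\mu,\nu)$ achieving $\mathbf{K}_{c_p}(\mu,\nu)$.

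The second and harder stage is to show $\gamma_0$ is induced by a map $\Phi$, i.e. $\gamma_0 = (\mathrm{Id} \times \Phi)_\# \mu$. For $p > 1$, the cost $|x-y|^p$ is strictly convex and one invokes the Brenier--Gangbo--McCann theorem: since $\mu = \mathcal{L}^n \mres E$ is absolutely continuous with respect to Lebesgue measure, any optimal plan is concentrated on the graph of a map, obtained as $\Phi(x) = x - (\nabla h_p)^{-1}(\nabla \varphi(x))$ for a $c_p$-concave Kantorovich potential $\varphi$, where $h_p(z) = |z|^p$; the key input is that $c_p$-cyclical monotonicity of $\spt \gamma_0$ plus differentiability $\mu$-a.e. of $\varphi$ (Rademacher, using absolute continuity of $\mu$) pins down the second coordinate uniquely in terms of the first. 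For $p = 1$ the cost is only convex, not strictly convex, and uniqueness of the transport \emph{plan} fails, but one still has existence of an optimal \emph{map}: this is the classical Monge problem for the Euclidean distance, resolved by Sudakov/Ambrosio/Evans--Gangbo/Caffarelli--Feldman--McCann, again crucially using that $\mu \ll \mathcal{L}^n$. I would cite the relevant theorems from the monographs already referenced (e.g. \cite{Santambrogio, Ambrosio, Villanioldandnew}) rather than reprove them, packaging both cases into the single statement. The main obstacle is precisely the $p=1$ case, where the passage from plan to map is genuinely delicate and cannot be done by the elementary strict-convexity argument; it requires a disintegration/dimension-reduction along transport rays, so in a short proof I would simply quote the appropriate existence result for the Monge problem with distance cost under the absolute continuity hypothesis on the source measure.
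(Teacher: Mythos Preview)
The paper does not give its own proof of this theorem: it is stated in the preliminaries as a known result, with the surrounding text explicitly deferring to the monographs \cite{Ambrosio,Santambrogio,Villanitopics,Villanioldandnew} for proofs and stronger versions. Your proposal is correct and in fact goes further than the paper by sketching the standard two-stage argument (direct method for an optimal plan, then upgrade to a map via Brenier--Gangbo--McCann for $p>1$ and the Sudakov/Ambrosio/Evans--Gangbo/Caffarelli--Feldman--McCann theory for $p=1$), ultimately also pointing to the same references. One minor caveat: your claim that finiteness of $\mathbf{K}_{c_p}(\mu,\nu)$ ``reduces to the bounded case since $E$ and $F$ are sets of finite measure'' is not quite right, since finite-measure sets can be unbounded with infinite $p$-th moments; but this does not affect the theorem as stated, since if the infimum is $+\infty$ any transport map (which exists for absolutely continuous equal-mass measures, e.g.\ via Knothe rearrangement) trivially realizes it.
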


\medskip

{Using optimal transport theory, one may define a distance between finite Lebesgue measure sets. This notion and more general ones involving mutually singular measures were analyzed in \cite{Butt}.}
\begin{definition}\label{definition of script W}
For positive Lebesgue measure sets $E$ and $F$ with equal measure, let
\begin{equation}\label{wasserstein distance between two sets}
    W_p(E,F) := \textbf{K}_{c_p}(\mathcal{L}^n\mres E,\mathcal{L}^n\mres F)^{\frac{1}{p}}.
\end{equation}
Also, we set
\begin{equation}\notag
   \mathcal{W}_p(E):= \inf\{W_p(E,F) \colon |F|=|E|, \, |F \cap E|=0 \},
\end{equation}
with the convention that $\mathcal{W}_p(E)=0$ if $|E|=0$.
\end{definition}

\medskip

When $\mathcal{L}^n\mres E$, $\mathcal{L}^n\mres F$ are in the space of Borel probability measures with finite $p$\textsuperscript{th} moments $ \mathcal{P}_p(\mathbb{R}^n)$, the definition \eqref{wasserstein distance between two sets} coincides with the much-studied $p$-Wasserstein distance between two disjoint sets, hence the duplicate notation. The rest of the preliminaries are dedicated to the properties of $\mathcal{W}_p$ necessary for our analysis.

\begin{lemma}[Properties of $\mathcal{W}_p$]\label{basic properties lemma}
Let $E\subset \mathbb{R}^n$ be Lebesgue measurable.
\begin{enumerate}[(i)]
\item \textup{(Monotonicity)} If $E\subset F$, where $F$ is Lebesgue measurable, then $\mathcal{W}_p(E) \leq \mathcal{W}_p(F)$.
\item \textup{(Positivity)} If $|E|>0$, then $\mathcal{W}_p(E) >0$.
\item \textup{(Scaling)} For any $r\geq 0$,
\begin{equation}\label{scaling of script W}
    \mathcal{W}_p(rE) = r^{1+\frac{n}{p}}\mathcal{W}_p(E).
\end{equation}
\item \textup{($L^q$-bound)} There exists $C_n$ such that
\begin{equation}\notag
\mathcal{W}_p(E) \leq C_n|E|^{\frac{1}{p}+\frac{1}{n}};
\end{equation}
(cf. \cite[Equation 4.2]{XZ21} for the same statement when $E$ is bounded).
\end{enumerate}
\end{lemma}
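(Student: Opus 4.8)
The four assertions are nearly independent, and the plan is to prove them in the order (iii), (iv), (ii), (i), since the explicit competitors built in (iii) and (iv) also supply the tool needed for (i). \emph{Scaling} (iii) is a change of variables in the Kantorovich problem: $rF$ is disjoint from $rE$ if and only if $F$ is disjoint from $E$, $|rF|=r^n|F|$, and pushing a coupling $\gamma\in\Pi(\mathcal L^n\mres E,\mathcal L^n\mres F)$ forward by $(x,y)\mapsto(rx,ry)$ and rescaling its total mass by $r^n$ yields a coupling of $\mathcal L^n\mres(rE)$ and $\mathcal L^n\mres(rF)$ of $c_p$-cost $r^{n+p}\int|x-y|^p\,d\gamma$; hence $\K_{c_p}(\mathcal L^n\mres (rE),\mathcal L^n\mres (rF))=r^{n+p}\K_{c_p}(\mathcal L^n\mres E,\mathcal L^n\mres F)$ (the reverse inequality follows by the same computation with $r^{-1}$), and taking the infimum over the bijection $F\leftrightarrow rF$ between admissible competitors gives \eqref{scaling of script W}; the case $r=0$ is the stated convention.

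For the \emph{$L^q$-bound} (iv) the idea is to exhibit a single competitor. We may assume $0<|E|<\infty$. Set $s:=(2|E|)^{1/n}$ and let $\{Q_j\}$ be the partition of $\mathbb R^n$ into half-open cubes of side $s$. Because $|E\cap Q_j|\leq|E|=\tfrac12 s^n$, we have $|E\cap Q_j|\leq|Q_j\setminus E|$ for every $j$, so there exist measurable sets $F_j\subseteq Q_j\setminus E$ with $|F_j|=|E\cap Q_j|$; put $F:=\bigcup_j F_j$. Then $F$ is disjoint from $E$, $|F|=|E|$, and by gluing together, cube by cube, an arbitrary coupling of $\mathcal L^n\mres(E\cap Q_j)$ with $\mathcal L^n\mres F_j$ one obtains $\gamma\in\Pi(\mathcal L^n\mres E,\mathcal L^n\mres F)$ with $\int|x-y|^p\,d\gamma\leq(\diam Q_j)^p|E|=(n^{1/2}s)^p|E|$. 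Therefore $\mathcal W_p(E)\leq W_p(E,F)\leq n^{1/2}s\,|E|^{1/p}=n^{1/2}2^{1/n}|E|^{1/p+1/n}$.

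For \emph{positivity} (ii) I would argue by contradiction with the help of a Lebesgue density point. Suppose $|E|>0$ but $\mathcal W_p(E)=0$, and pick competitors $F_k$ with $m:=|E|=|F_k|$, $|E\cap F_k|=0$, and $w_k:=W_p(\mathcal L^n\mres E,\mathcal L^n\mres F_k)\to0$; by H\"older's inequality $W_1(\mathcal L^n\mres E,\mathcal L^n\mres F_k)\leq m^{1-1/p}w_k\to0$. Fix a density point $x_0$ of $E$ and, for a small $\eps>0$ to be chosen, a radius $r$ with $|E\cap B(x_0,r)|\geq(1-\eps)\omega_n r^n$ and $|E\cap B(x_0,r/2)|\geq(1-\eps)\omega_n (r/2)^n$, and let $\phi$ be the Lipschitz function that equals $1$ on $B(x_0,r/2)$, vanishes outside $B(x_0,r)$, and satisfies $0\leq\phi\leq1$, $\Lip\phi=2/r$. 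Then $\int\phi\,d(\mathcal L^n\mres E)\geq|E\cap B(x_0,r/2)|\geq(1-\eps)2^{-n}\omega_n r^n$, while disjointness of $E$ and $F_k$ gives $\int\phi\,d(\mathcal L^n\mres F_k)\leq|F_k\cap B(x_0,r)|\leq\omega_n r^n-|E\cap B(x_0,r)|\leq\eps\,\omega_n r^n$. Since, testing along any coupling, $|\int\phi\,d(\mathcal L^n\mres E)-\int\phi\,d(\mathcal L^n\mres F_k)|\leq\Lip(\phi)\,W_1(\mathcal L^n\mres E,\mathcal L^n\mres F_k)\to0$, letting $k\to\infty$ forces $(1-\eps)2^{-n}\leq\eps$, which is impossible once $\eps<2^{-n-1}$.

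Finally, for \emph{monotonicity} (i), given $E\subseteq F$ one may assume $|E|>0$ and $\mathcal W_p(F)<\infty$. I would fix $\delta>0$, choose a competitor $G$ for $F$ with $W_p(F,G)^p\leq\mathcal W_p(F)^p+\delta$, and let $\Phi$ be an optimal transport map from $\mathcal L^n\mres F$ to $\mathcal L^n\mres G$ (\Cref{existence of optimal transport maps}). The measure $\mu:=\Phi_\#(\mathcal L^n\mres E)$ satisfies $\mu\leq\mathcal L^n\mres G$ (because $\mathcal L^n\mres E\leq\mathcal L^n\mres F$), hence $\mu=h\,\mathcal L^n$ with $0\leq h\leq1$ and $\{h>0\}\subseteq G$ up to null sets, so $\{h>0\}$ is essentially disjoint from $E$ since $|E\cap G|=0$; moreover the coupling $(\Id\times\Phi)_\#(\mathcal L^n\mres E)$ gives $W_p(\mathcal L^n\mres E,\mu)^p\leq\int_E|x-\Phi(x)|^p\,dx\leq\int_F|x-\Phi(x)|^p\,dx=W_p(F,G)^p$. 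It then suffices to prove $\mathcal W_p(E)\leq W_p(\mathcal L^n\mres E,\mu)$ and send $\delta\downarrow0$, and here I would reuse the grid construction from (iv): with cubes $\{Q_j\}$ of small side $s$ one has $\mu(Q_j)=\int_{Q_j}h\leq|Q_j\cap\{h>0\}|\leq|Q_j\setminus E|$, so there exist $F_j^s\subseteq Q_j\setminus E$ with $|F_j^s|=\mu(Q_j)$, and then $F^s:=\bigcup_j F_j^s$ is admissible for $E$ with $W_p(\mu,\mathcal L^n\mres F^s)\leq n^{1/2}s\,|E|^{1/p}$, whence $\mathcal W_p(E)\leq W_p(\mathcal L^n\mres E,\mu)+n^{1/2}s\,|E|^{1/p}\to W_p(\mathcal L^n\mres E,\mu)$ as $s\downarrow0$. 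I expect this last discretization to be the main point needing care: the optimal pushforward $\mu$ is in general \emph{not} the restriction of Lebesgue measure to a set (its density may be strictly below $1$), so it is not itself an admissible competitor and must be replaced by a genuine equal-volume set disjoint from $E$ at the price of only an $o(1)$ error in the transport cost --- the bound $h\leq1$ being precisely what leaves enough room inside each small cube. Conceptually this amounts to the fact that the infimum defining $\mathcal W_p(E)$ is unchanged if one enlarges the admissible class from sets disjoint from $E$ to all finite measures of mass $|E|$ that are singular with respect to $\mathcal L^n\mres E$.
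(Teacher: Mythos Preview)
Your proof is correct in all four parts. Parts (iii) and (iv) match the paper's approach: (iii) is the change of variables the paper labels ``immediate from the definition'', and (iv) is the same cube-decomposition competitor (the paper first rescales to $|E|=1$ via (iii) and uses cubes of volume $2$; you take cubes of side $(2|E|)^{1/n}$ directly, which amounts to the same construction).

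The genuine differences are in (i) and (ii). For (ii), the paper first reduces to bounded $E$ via monotonicity (i), then argues that $W_p(E,F_m)\to 0$ forces $\mathcal L^n\mres F_m\overset{\ast}{\rightharpoonup}\mathcal L^n\mres E$, which is incompatible with $|E\cap F_m|=0$; your Lebesgue-density-point argument with a Lipschitz cutoff is a hands-on version of the same weak-convergence obstruction and has the small advantage of not invoking (i), which fits your chosen order of proof. For (i), the paper simply declares the result ``immediate from the definition'', implicitly relying on the fact (treated in the more general framework of \cite{Butt}) that the infimum defining $\mathcal W_p(E)$ is unchanged if one enlarges the competitor class from sets disjoint from $E$ to measures of mass $|E|$ with density at most $1$ supported off $E$. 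You correctly observe that restricting an optimal map $\Phi\colon F\to G$ to $E\subset F$ yields only such a sub-uniform pushforward $\mu$, not a characteristic function, and you supply the discretization step replacing $\mu$ by a genuine admissible set at cost $O(s)\to 0$. Your route is thus more self-contained; the paper's is shorter but leans on background established elsewhere.
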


\begin{proof}
Items $(i)$ and $(iii)$ follow immediately from the definition of $\mathcal{W}_p$. By $(i)$, it suffices to prove $(ii)$ in the case that $|E|>0$ and $E$ is bounded. 

Suppose for a contradiction that $F_m$ is such that $\mathbf{K}_{c_p}(\mathcal{L}^n\mres E, \mathcal{L}^n\mres F_m)\to 0$. In this case, $E$ and $F_m$ have finite $p$\textsuperscript{th} moments, so by the properties of the $p$-Wasserstein distance, $W_p(E, F_m)\to 0$ implies that $\mathcal{L}^n\mres F_m \overset{\ast}{\rightharpoonup} \mathcal{L}^n\mres E$ {(see for example \cite[Theorem 5.11]{Santambrogio})}. But this is incompatible with $|E\cap F_m|=0$ and $|E|=|F_m|>0$, so  we have a contradiction.

For $(iv)$, by the scaling \eqref{scaling of script W}, it is enough to prove the claim when $|E|=1$. Divide $\mathbb{R}^n$ into disjoint cubes $Q_j$ of volume 2. For each $j$, since $|Q_j|=2$ and $|E\cap Q_j|\leq 1$, we can find $F_j\subset Q_j$ such that $|F_j|=|Q_j \cap E|$ and $|F_j\cap E|=0$. Let $T_j$ transport $E \cap Q_j$ onto $F_j$, and set $F=\bigcup_j F_j$. Then it is easy to see that the map $T$ defined by
\begin{equation*} 
T(x) = T_j(x) \qquad \text{for }x\in E\cap Q_j
\end{equation*}
transports $E$ onto $F$ and satisfies $|x - T(x)| \leq \diam (Q_j)$ for $x \in   E \cap Q_j$. Thus
\begin{equation}\notag
    \mathcal{W}_p(E) \leq \left(\int_E |x - T(x)|^p\,dx \right)^{\frac{1}{p}} \leq  C_n,
\end{equation}
since $\mathrm{diam}(Q_j) =: C_n,$ independent of $j,$ and $|E| = 1.$ The claim follows.
\end{proof}

\begin{proposition}[Continuity of $\mathcal{W}_p$ with respect to $L^1$-convergence]\label{L1 control of script W}
There exists $C_{n,p}$ such that for any $|E|$, $|\tilde{E}|$,
\begin{equation}\label{L1 control to the p}
    \left|\mathcal{W}_p^p(\tilde{E}) - \mathcal{W}_p^p(E) \right|\leq C_{n,p} \max \{|E|^{\frac{p}{n}},|\tilde{E}|^{\frac{p}{n}} \}|E\triangle \tilde{E}|
\end{equation}
and
\begin{equation}\label{L1 control}
    \left|\mathcal{W}_p(\tilde{E}) - \mathcal{W}_p(E) \right| \leq C_{n,p} \max\{\mathcal{W}_p^{1-p}(E),\mathcal{W}_p^{1-p}(\tilde{E}) \}\max \{|E|^{\frac{p}{n}},|\tilde{E}|^{\frac{p}{n}} \}|E\triangle \tilde{E}|.
\end{equation}
\end{proposition}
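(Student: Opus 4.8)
The plan is to derive \eqref{L1 control} from \eqref{L1 control to the p} by an elementary scalar inequality, and to prove \eqref{L1 control to the p} by a surgery argument on near-optimal competitors. For the first reduction: for $a,b\ge 0$ and $p\ge 1$ the mean value theorem gives $|a^p-b^p| = p\,\xi^{p-1}|a-b|$ for some $\xi$ between $a$ and $b$, so (unless $a=b=0$) $|a-b|\le \tfrac1p\,\max\{a^{1-p},b^{1-p}\}\,|a^p-b^p|$; taking $a=\mathcal{W}_p(\tilde E)$, $b=\mathcal{W}_p(E)$ and inserting \eqref{L1 control to the p} yields \eqref{L1 control}. (If one of the sets has zero measure, both estimates are immediate from the convention $\mathcal{W}_p\equiv 0$ there.) So everything reduces to \eqref{L1 control to the p}.

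Next I would reduce \eqref{L1 control to the p} to the special case where one set is contained in the other. Writing $\mathcal{W}_p^p(E)-\mathcal{W}_p^p(\tilde E)=\big(\mathcal{W}_p^p(E)-\mathcal{W}_p^p(E\cap\tilde E)\big)-\big(\mathcal{W}_p^p(\tilde E)-\mathcal{W}_p^p(E\cap\tilde E)\big)$ and using monotonicity (item $(i)$ of \Cref{basic properties lemma}) to see that the second bracket is $\ge 0$, it suffices to prove: for measurable $A\subset B$ with $|B|>0$,
\[
0 \;\le\; \mathcal{W}_p^p(B)-\mathcal{W}_p^p(A) \;\le\; C_{n,p}\,|B|^{p/n}\,|B\setminus A|,
\]
since then, applying this with $(A,B)=(E\cap\tilde E,E)$ and with $(A,B)=(E\cap\tilde E,\tilde E)$, and using $|B\setminus A|\le|E\triangle\tilde E|$ and $|B|\le\max\{|E|,|\tilde E|\}$, gives \eqref{L1 control to the p} in both directions; the $|B|=0$ and degenerate cases are trivial. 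The lower bound is again monotonicity.

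For the core estimate, fix $A\subset B$ and set $\delta_0:=|B\setminus A|=|B|-|A|$. If $\delta_0\ge|B|/2$, the $L^q$-bound (item $(iv)$ of \Cref{basic properties lemma}) already gives $\mathcal{W}_p^p(B)\le C_n^p|B|^{1+p/n}\le 2C_n^p|B|^{p/n}\delta_0$, so assume $\delta_0<|B|/2$ (in particular $|A|>0$). Given $\eta>0$, choose a competitor $F_A$ for $\mathcal{W}_p(A)$, that is $|F_A|=|A|$, $|F_A\cap A|=0$, $W_p(A,F_A)^p\le\mathcal{W}_p^p(A)+\eta$, with an optimal transport map $\Phi\colon A\to F_A$ (\Cref{existence of optimal transport maps}). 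Since $F_A$ is disjoint from $A$ but possibly not from $B\setminus A$, set $O:=F_A\cap B=F_A\cap(B\setminus A)$, so $|O|\le\delta_0$, and let $A_O:=\Phi^{-1}(O)$, with $|A_O|=|O|$. I then build a competitor for $\mathcal{W}_p(B)$ by surgery: $F_B:=(F_A\setminus O)\cup G$, where $G$ is disjoint from $B\cup F_A$ and $|G|=|O|+\delta_0$, so that $|F_B|=|B|$ and $F_B\cap B=\emptyset$ (using $F_A\cap B=O$). To produce $G$ and transport the leftover mass $H:=A_O\cup(B\setminus A)$ — which satisfies $|H|=|O|+\delta_0\le 2\delta_0<|B|$ — into it at bounded cost, tile $\RR^n$ by cubes $Q_j$ of volume $3|B|$; since $|B\cup F_A|\le|B|+|A|\le 2|B|$, each $Q_j$ leaves a free set $Q_j\setminus(B\cup F_A)$ of measure $\ge|B|>|H|\ge|H\cap Q_j|$, so one can pick $G_j\subset Q_j\setminus(B\cup F_A)$ with $|G_j|=|H\cap Q_j|$ and a measure-preserving map $H\cap Q_j\to G_j$ with displacement $\le\diam Q_j=C_n|B|^{1/n}$. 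With $G:=\bigcup_j G_j$, the map that equals $\Phi$ on $A\setminus A_O$ and the cube maps on $H$ transports $B$ onto $F_B$ at total cost $\le \big(\mathcal{W}_p^p(A)+\eta\big)+C_n^p|B|^{p/n}|H|\le\mathcal{W}_p^p(A)+\eta+2C_n^p|B|^{p/n}\delta_0$; hence $\mathcal{W}_p^p(B)\le W_p(B,F_B)^p\le\mathcal{W}_p^p(A)+\eta+C_{n,p}|B|^{p/n}\delta_0$, and $\eta\to0$ finishes it.

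The one genuinely delicate point — the place I expect to spend the most care — is precisely the overlap: a near-optimal competitor $F_A$ for the smaller set $A$ is disjoint from $A$ but need not avoid the ``new'' region $B\setminus A$, so $F_A$ is not admissible for $B$ and must be repaired by excising $O=F_A\cap B$ and re-routing both $A_O$ and $B\setminus A$. There is no reason for free space to exist near the re-routed mass $H$ at small scales, and the fix is the global accounting $|B\cup F_A|\le 2|B|$, which guarantees that cubes of volume $3|B|$ always retain free volume $\ge|B|>|H|$; this keeps displacements on the scale $|B|^{1/n}$ and the extra cost on the scale $|B|^{p/n}\delta_0$. (Routine check, to be included for completeness: $\Phi|_{A\setminus A_O}$ pushes $\mathcal{L}^n\mres(A\setminus A_O)$ onto $\mathcal{L}^n\mres(F_A\setminus O)$, because $\Phi_\#(\mathcal{L}^n\mres A_O)$ is a measure $\le\mathcal{L}^n\mres F_A$ concentrated on $O$ with total mass $|O|=(\mathcal{L}^n\mres F_A)(O)$, hence equal to $\mathcal{L}^n\mres O$; alternatively the whole construction can be run with transport plans instead of maps.)
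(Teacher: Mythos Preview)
Your argument is correct, and the overall architecture---mean value theorem for \eqref{L1 control}, then a cube-based surgery on a near-optimal competitor to prove \eqref{L1 control to the p}---matches the paper's. The genuine difference is your intermediate reduction to the nested case $A\subset B$ via the telescoping identity through $E\cap\tilde E$ and monotonicity. The paper instead normalizes $|\tilde E|=1$ by scaling, assumes without loss of generality that $\mathcal{W}_p^p(\tilde E)\ge\mathcal{W}_p^p(E)$, and works directly with an arbitrary competitor $F$ for $E$ (not for $\tilde E$), modifying the optimal map $\Phi:E\to F$ into a transport of $\tilde E$; the ``bad'' set to be rerouted is $\tilde E\cap\big(E^c\cup\Phi^{-1}(F\cap\tilde E)\big)$, and the key measure estimate is $|\tilde E\cap\Phi^{-1}(F\cap\tilde E)|\le|\tilde E\cap E^c|$. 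Your nested reduction buys a cleaner overlap analysis (only $O=F_A\cap(B\setminus A)$ and its preimage $A_O$ need rerouting) and avoids the case split on $|E|$ via the simpler dichotomy $\delta_0\gtrless|B|/2$; the paper's direct approach avoids the extra telescoping step but carries slightly more bookkeeping in the surgery. Your global accounting $|B\cup F_A|\le 2|B|$ with cubes of volume $3|B|$ is the exact analogue of the paper's $|\tilde E\cup F|\le 3$ (after normalization) with cubes of volume $4$. The parenthetical verification that $\Phi|_{A\setminus A_O}$ pushes onto $F_A\setminus O$ is correct as written and is indeed the one point that needs care.
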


\medskip

\begin{remark}\notag
When $E$ and $F$ are both bounded with unit measure, \Cref{L1 control of script W} is contained in \cite[Lemma 4.5]{Butt}.
\end{remark}

\medskip

\begin{proof}[Proof of \Cref{L1 control of script W}]
First we demonstrate how \eqref{L1 control} follows from \eqref{L1 control to the p}. By applying the mean value theorem to the function $t \mapsto t^{1/p}$, we deduce that
\begin{equation}\notag
\left|\mathcal{W}_p(\tilde{E}) - \mathcal{W}_p(E) \right|\leq \frac{1}{p}\max\{\mathcal{W}_p^{1-p}(E),\mathcal{W}_p^{1-p}(\tilde{E}) \} \left|\mathcal{W}_p^p(\tilde{E}) - \mathcal{W}_p^p(E) \right|.
\end{equation}
The bound \eqref{L1 control} follows immediately from this equation and \eqref{L1 control to the p}.\par

It remains to prove \eqref{L1 control to the p}. Without loss of generality, 
\begin{equation}\label{greater transport cost}
\mathcal{W}_p^p(\tilde{E})\geq \mathcal{W}_p^p(E).
\end{equation}
We may also assume that
\begin{equation}\label{unit volume assumption}
|\tilde{E}|=1;
\end{equation}
the full case then follows from rescaling. Fix any $F$ with $|F|=|E|$ and $|F\cap E|=0$. If we can show that 
\begin{equation}\label{L1 control of transport maps}
    \mathcal{W}_p^p (\tilde{E}) - W_p^p(E,F) \leq C_{n,p}\max \{|E|^{\frac{p}{n}},1 \} |E\triangle \tilde{E}|,
\end{equation}
then, in light of \eqref{greater transport cost}, taking the infimum over $F$ disjoint from $E$ gives \eqref{L1 control to the p} when $|\tilde{E}|=1$.\par

To show \eqref{L1 control of transport maps} under the assumptions \eqref{greater transport cost} and \eqref{unit volume assumption}, first consider the case that
$$|E|\geq 2.$$
Then since $|\tilde{E}|=1$, we have $|E \triangle \tilde{E}| \geq 1$, so that 
\begin{equation}\label{lower bound for large E}
    \max\{ |E|^{\frac{p}{n}}, 1\} |E \triangle \tilde{E}| \geq 2^{\frac{p}{n}}.
\end{equation}
In addition,
\begin{equation}\notag
    \mathcal{W}_p^p (\tilde{E}) - W_p^p(E,F) \leq \mathcal{W}_p^p(\tilde{E}) \leq C_{n,p},
\end{equation}
which together with \eqref{lower bound for large E} gives \eqref{L1 control of transport maps} after suitably modifying $C_{n,p}$. For the rest of the proof of \eqref{L1 control of transport maps}, we therefore assume that
\begin{equation}\label{small E assumption}
    |E|\leq 2.
\end{equation}
Let $\Phi$ be an optimal transport map from $E $ to $F $, which exists by \Cref{existence of optimal transport maps}. The idea is to modify $\Phi$ to create a transport map $\tilde{\Phi}$ for $ \tilde{E}$ (to a set of the appropriate measure), which allows for comparison between $\mathcal{W}_p^p(\tilde{E})$ and $W_p^p(E,F)$. When $x\in E \cap \tilde{E}$ and $\Phi(x) \notin \tilde{E}$, we can define $\tilde{\Phi}$ simply by using $\Phi$:
\begin{equation}\label{transport of the good points}
    \tilde{\Phi}(x) =
      \Phi(x) \quad \textup{if }x \in \tilde{E} \cap E \cap \Phi^{-1}(F \cap \tilde{E}^c).
\end{equation}
For the rest of the points in $\tilde{E}$, we must make a new definition. We partition $\mathbb{R}^n$ into cubes $Q_j$ of volume 4. Since $|Q_j\setminus (\tilde{E}\cup F)|\geq 1$ and $|\tilde{E}|= 1$, there exist measurable sets $D_j \subset Q_j$ such that
\begin{equation}\notag
   D_j \cap \tilde{E} = \emptyset = D_j \cap F \quad\textup{and}\quad|D_j| = |Q_j \cap \tilde{E} \cap (E^c \cup \Phi^{-1}(F \cap \tilde{E}))| \leq 1.
\end{equation}
We may obtain optimal transport maps $\Phi_j$ from $Q_j \cap \tilde{E} \cap (E^c \cup \Phi^{-1}(F \cap \tilde{E}))$ to $D_j$ and define
\begin{equation}\notag
    \tilde{\Phi}(x) =
      \Phi_j(x) \quad\textup{if }x\in Q_j \cap \tilde{E} \cap (E^c \cup \Phi^{-1}(F \cap \tilde{E})).
  \end{equation}
Before estimating the energy difference, we note that since $\Phi$ is a transport map and $F\subset E^c$,
\begin{equation}\label{measure preserving estimate}
    |\tilde{E} \cap  \Phi^{-1}(F \cap \tilde{E})| \leq |\Phi^{-1}(\tilde{E} \cap F)|=|\tilde{E}\cap F|\leq |\tilde{E} \cap E^c|.
\end{equation}
Then
\begin{align}\notag
\mathcal{W}_p^p(\tilde{E}) - W_p^p(E,F)
    &\leq\int_{\tilde{E}}|x - \tilde{\Phi}(x)|^p\,dx - \int_{E}|x - \Phi(x)|^p\,dx  \notag \\ \notag
& \leq\sum_j\int_{Q_j \cap \tilde{E} \cap (E^c \cup \Phi^{-1}(F \cap \tilde{E}))}|x - \Phi_j(x)|^p\,dx   \\ \notag
&\leq \sum_j \diam (Q_j)^p (|Q_j \cap \tilde{E} \cap E^c|+|Q_j \cap \tilde{E}\cap \Phi^{-1}(F \cap \tilde{E})|)  \\ \notag
&= \diam (Q_j)^p (|\tilde{E}\cap E^c|+ |\tilde{E} \cap  \Phi^{-1}(F \cap \tilde{E})| )\\ \label{bound for small E}
&\hspace{-.24cm} \xupref{measure preserving estimate}{\leq} 2\,\diam(Q_j)^p |\tilde{E}\triangle E|.
\end{align}
Since $1\leq \max \{|E|^{\frac{p}{n}},1 \}\leq 2^{\frac{p}{n}}$, \eqref{bound for small E} implies \eqref{L1 control of transport maps}. The proof is complete.
\end{proof}

\medskip

\begin{lemma}[Non-existence of minimizers for $\mathcal{W}_p$]\label{vanishing wasserstein implies diverging perimeters}
There exists a sequence $\{ (E_m, F_m) \}$ such that $|E_m\cap F_m|=0$, $|E_m|=|F_m|=1$, and
\begin{equation}\notag
    W_p(E_m,F_m) \to 0.
\end{equation}
Furthermore, for any sequence satisfying those three properties,
\begin{equation}\label{diverging perimeters}
    P(E_m),\, P(F_m) \to \infty.
\end{equation}
\end{lemma}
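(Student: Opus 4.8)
The first assertion is elementary. In $\RR^n$, partition the segment $[0,2]$ in the last coordinate into the $2m$ intervals $I_j=[(j-1)/m,\,j/m]$, $1\le j\le 2m$, and let $E_m$ (resp.\ $F_m$) be the union over odd (resp.\ even) $j$ of the slabs $[0,1]^{n-1}\times I_j$ (when $n=1$ this simply means $E_m=\bigcup_{j\ \text{odd}} I_j$). Then $|E_m|=|F_m|=1$ and $|E_m\cap F_m|=0$, while the map that translates each $E_m$-slab by $1/m$ in the last coordinate onto the adjacent $F_m$-slab transports $\mathcal L^n\mres E_m$ onto $\mathcal L^n\mres F_m$ and displaces every point by exactly $1/m$; hence $W_p(E_m,F_m)\le 1/m\to 0$.

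For \eqref{diverging perimeters} I argue by contradiction; it suffices to show $P(E_m)\to\infty$, the case of $F_m$ being symmetric since the hypotheses and $W_p$ are symmetric in $E_m\leftrightarrow F_m$. Suppose $P(E_m)\le C$ along a subsequence, not relabeled. Since $|E_m|=1$, the isoperimetric inequality gives the uniform bound $P(E_m)\ge n\omega_n^{1/n}$, so \Cref{nucleation/compactness remark} applies with some $\varepsilon>0$ depending only on $n$: passing to a further subsequence and translating, $\tilde E_m:=E_m-x_m\to G$ in $L^1_\loc(\RR^n)$ with $0<|G|\le 1$ (positivity from \eqref{volume of E on B1}, and $|G|\le 1$ since $|G\cap B(0,R)|=\lim_m|\tilde E_m\cap B(0,R)|\le 1$ for each $R$). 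The plan is to show that the translated sets $\tilde F_m:=F_m-x_m$ have the same vague limit $1_G$, and then invoke disjointness.

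Let $\Phi_m$ be an optimal transport map from $E_m$ to $F_m$ (\Cref{existence of optimal transport maps}); then $\tilde\Phi_m(x):=\Phi_m(x+x_m)-x_m$ transports $\tilde E_m$ onto $\tilde F_m$ with $\int_{\tilde E_m}|x-\tilde\Phi_m(x)|^p\,dx=W_p(E_m,F_m)^p\to 0$. For $\phi\in C^1_c(\RR^n)$, the identity $\int\phi\,(1_{\tilde F_m}-1_{\tilde E_m})=\int_{\tilde E_m}\bigl(\phi(\tilde\Phi_m(x))-\phi(x)\bigr)\,dx$, the Lipschitz bound on $\phi$, Jensen's inequality and $|\tilde E_m|=1$ give $\bigl|\int\phi\,(1_{\tilde F_m}-1_{\tilde E_m})\bigr|\le\|\nabla\phi\|_\infty\,W_p(E_m,F_m)\to 0$; since $1_{\tilde E_m}\to1_G$ in $L^1_\loc$, also $\int\phi\,1_{\tilde F_m}\to\int_G\phi$. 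Now take $\phi\ge 0$: by disjointness $1_{\tilde E_m}+1_{\tilde F_m}=1_{\tilde E_m\cup\tilde F_m}\le 1$, so $\int\phi\,1_{\tilde E_m}+\int\phi\,1_{\tilde F_m}\le\int\phi$; letting $m\to\infty$ yields $2\int_G\phi\le\int\phi$ for every nonnegative $\phi\in C^1_c(\RR^n)$, hence $2\cdot 1_G-1\le 0$ a.e.\ and therefore $|G|=0$, contradicting $|G|>0$. Thus $P(E_m)\to\infty$, and translating $F_m$ instead while using the same map $\Phi_m$ gives $P(F_m)\to\infty$.

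I expect the main difficulty to be the loss of compactness: there is no a priori bound on $F_m$, and $E_m$ may break into chunks diverging to infinity, so one cannot directly pass to an $L^1_\loc$-limit of either sequence. \Cref{nucleation/compactness remark} is invoked precisely to furnish the translations and a limit set of positive measure for $E_m$; the Wasserstein bound then transfers weak convergence onto $F_m$, and the disjointness constraint supplies the contradiction. This is also exactly where the perimeter bound enters, consistent with the fact---recorded in the first half of the statement---that it cannot be dispensed with.
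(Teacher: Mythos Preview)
Your proof is correct. The construction is the same sort of stratified example the paper alludes to, only made explicit; the second part is where you diverge from the paper.

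The paper's argument for \eqref{diverging perimeters} runs through the functional $\mathcal{W}_p$: after extracting a nontrivial $L^1_\loc$-limit $E$ via \Cref{nucleation/compactness remark}, it uses monotonicity (\Cref{basic properties lemma}(i)) to get $\mathcal{W}_p(E_m\cap B(0,1))\le \mathcal{W}_p(E_m)\le W_p(E_m,F_m)\to 0$, then the $L^1$-continuity of $\mathcal{W}_p$ from \Cref{L1 control of script W} to pass to the limit, and finally the strict positivity $\mathcal{W}_p(E\cap B(0,1))>0$ from \Cref{basic properties lemma}(ii) for the contradiction. Your route is more direct and entirely bypasses $\mathcal{W}_p$: you push the weak convergence of $1_{\tilde E_m}$ onto $1_{\tilde F_m}$ via the transport map and the bound $\bigl|\int\phi(1_{\tilde F_m}-1_{\tilde E_m})\bigr|\le\|\nabla\phi\|_\infty W_p(E_m,F_m)$, and then exploit the pointwise constraint $1_{\tilde E_m}+1_{\tilde F_m}\le 1$ to force $|G|=0$. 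This is self-contained---it does not rely on \Cref{basic properties lemma} or \Cref{L1 control of script W}---and in fact your weak-convergence observation is essentially a one-line proof of \Cref{basic properties lemma}(ii) as well. The paper's version is terser because the $\mathcal{W}_p$ machinery is already in place; yours would stand on its own if those lemmas were absent.
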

\begin{proof}
We omit a full proof of the construction of such a sequence, which is straightforward. There are many ansatzes one could use; for example, $E_m$ could be a single thin, arbitrarily long cylinder, and $F_m$ a suitable tubular neighorhood. Alternatively, $E_m$ and $F_m$ could be suitably many disjoint arbitrarily small balls and corresponding annuli around them. The latter example may be viewed as an analogue of the microdroplet instability discovered in \cite{LPS} in our static setting. 

To prove \eqref{diverging perimeters}, assume for contradiction that $W_p(E_m,F_m)\to 0$ but $\limsup P(E_m)< \infty$. By \Cref{nucleation/compactness remark}, the uniform perimeter bound implies that, up to translations which we ignore, there exists a set $E$ with $|E\cap B(0,1)| >0$ and $E_m \overset{\loc}{\to} E$. Therefore, $E_m \cap B(0,1) \to E \cap B(0,1)$, and so by the $L^1$-continuity of $\mathcal{W}_p$, \begin{equation}\notag
0<\mathcal{W}_p(E \cap B(0,1))   = \liminf_{m \to \infty} \mathcal{W}_p(E_m \cap B(0,1))   \leq \liminf_{m\to \infty} \mathcal{W}_p(E_m, F_m) =0.
\end{equation}
We have thus arrived at a contradiction. The proof that $P(F_m)$ diverges is the same.
\end{proof}

\begin{remark} In their paper \cite{C-TG22}, Candau-Tilh and Goldman obtain the following interpolation inequality
\begin{equation}\label{interpolation inequality}
    W_p(E,F) P(E) \geq C(n) |E|^{1+\frac{1}{p}}.
\end{equation}
As a consequence of this inequality one can obtain \eqref{diverging perimeters} in Lemma \ref{vanishing wasserstein implies diverging perimeters}. Here we provide an alternative proof of this interpolation inequality which effectively quantifies our proof of \eqref{diverging perimeters}.
\end{remark}
\begin{proof}[Proof of \eqref{interpolation inequality}]
We first observe that there exists $C_{n}>0$ such that if $|E\cap B_r|\geq 3\omega_n r^n/4$, then
\begin{equation}\label{large wass}
\mathcal{W}_p(E \cap B_r) \geq C_{n} r^{1+\frac{n}{p}}.
\end{equation}
This is due to the fact that at least $\omega_n r^n/4$ of the mass of $E \cap B_r$ is contained in $B_{3r/4}$ and must be transported outside $B_r$. Let $|E|=|F|$ and $|E\cap F|=0$ and consider any Lebesgue point $x\in E^{(1)}$. By the continuity of $r\to |E\cap B_r(x)|$, the fact that $x\in E^{(1)}$, and the intermediate value theorem, there exists
\begin{equation}\label{radius bound for Besicovitch}
r_x \leq \left(\frac{4|E|}{3\omega_n}\right)^{\frac{1}{n}}
\end{equation}
such that $|E\cap B_{r_x}(x)|=3\omega_n r_x^n/4 $. By \eqref{radius bound for Besicovitch}, we can apply the Besicovitch covering theorem to the family of closed balls $\mathcal{F}=\{\overline{B}_{r_x}:x\in E^{(1)} \}$, to obtain subfamilies $\mathcal{F}_1$, $\dots$, $\mathcal{F}_{\xi(n)}$, each of which consists of disjoint balls, such that
\begin{equation}\notag
    E^{(1)}\subset \bigcup_{i=1}^{\xi(n)}\bigcup_{\overline{B}_{r_x}\in \mathcal{F}_i}\overline{B}_{r_x}\,.
\end{equation}
By the relative isoperimetric inequality, since $|E\cap B_{r_x}|=3\omega_n r_x^n/4$, we have for some $c_n$
\begin{equation}\label{relative iso}
    P(E; B_{r_x}) \geq c_n r_x^{n-1}\quad\forall x\in E^{(1)} .
\end{equation}
Also, with $\Phi$ denoting the optimal transport map from $E$ to $F$, we may use the observation \eqref{large wass} to see that
\begin{align}\label{wass scale}
    W_p(E\cap B_{r_x}, \Phi(E\cap B_{r_x})) \geq \mathcal{W}_p(E\cap B_{r_x}(x)) \geq C_{n}r^{1+\frac{n}{p}}\,.
\end{align}
Finally, combining \eqref{relative iso}-\eqref{wass scale} with H\"{o}lder's inequality, we may estimate
\begin{align*}
    W_p(E,F) P(E) &\geq \xi(n)^{-1-\frac{1}{p}}\left(\sum_{i=1}^{\xi(n)}\sum_{\overline{B}_{r_x}\in \mathcal{F}_i}\int_{E\cap \overline{B}_{r_x}}|z-\Phi(z)|^p\,dz\right)^{\frac{1}{p}}\left(\sum_{i=1}^{\xi(n)}\sum_{\overline{B}_{r_x}\in \mathcal{F}_i}P(E;\overline{B}_{r_x})\right) \\
    &\geq \xi(n)^{-2}\left[\left(\sum_{i=1}^{\xi(n)}\sum_{\overline{B}_{r_x}\in \mathcal{F}_i}C_{n}^pr_x^{p+n}\right)^{\frac{1}{p+1}}\left(\sum_{i=1}^{\xi(n)}\sum_{\overline{B}_{r_x}\in \mathcal{F}_i}c_{n}r_x^{n-1}\right)^{\frac{p}{p+1}}\right]^{\frac{p+1}{p}} \\
    &\geq \frac{C_nc_n}{\xi(n)^{2}}\left(\sum_{i=1}^{\xi(n)}\sum_{\overline{B}_{r_x}\in \mathcal{F}_i}r_x^{\frac{p+n}{p+1}}r_x^{\frac{p(n-1)}{p+1}}\right)^{\frac{p+1}{p}} \\
    &= \frac{C_nc_n}{\xi(n)^{2}}\left(\sum_{i=1}^{\xi(n)}\sum_{\overline{B}_{r_x}\in \mathcal{F}_i}r_x^n\right)^{\frac{p+1}{p}} \\
    &= \frac{C_nc_n}{\xi(n)^{2}}\left(\sum_{i=1}^{\xi(n)}\sum_{\overline{B}_{r_x}\in \mathcal{F}_i}\frac{4|E\cap B_{r_x}(x)|}{3\omega_n}\right)^{\frac{p+1}{p}} \\
    &\geq \tilde{C}_n|E|^{1+\frac{1}{p}},
\end{align*}
where in the last equality we have used the fact that $|E\cap B_{r_x}| = 3\omega_n r_x^n/4$.
\end{proof}

\bigskip

The last preliminary result is drawn from \cite{Butt} and \cite{XZ21}.
\begin{theorem}\label{Buttazzo existence theorem}
Let $E$ be a bounded Lebesgue measurable set.
\begin{enumerate}[(i)]
    \item \textup{\cite[Theorem 3.21]{Butt}} There exists a Lebesgue measurable set $F$ with $|F|=|E|$ and $|E\cap F|=0$ and an optimal transport map $\Phi$ from $E$ to $F$ such that
\begin{equation}\notag
    W_p(E,F) = \mathcal{W}_p(E).
\end{equation}
\item \textup{\cite[Lemma 4.3]{XZ21}} There exists $C_n$ such that for $\mathcal{L}^n$-a.e. $x\in E$,
\begin{equation}\notag
    |x - \Phi(x)| \leq C_n |E|^{\frac{1}{n}}.
\end{equation}
\end{enumerate}
\end{theorem}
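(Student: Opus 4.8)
The plan is to handle (i) via a direct method over a convex, relaxed class of target measures — from which the relaxed minimizer is forced to be an indicator — and then to deduce (ii) from an explicit cut-and-paste comparison. I will assume $|E|>0$, the case $|E|=0$ being trivial, and fix the dimensional constant $C_n:=\sqrt n\,3^{1/n}$ together with the bounded neighborhood $K:=\{x\in\RR^n:\dist(x,E)\le C_n|E|^{1/n}\}$ of $E$.

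The first step would be a reduction to competitors contained in $K$. Given an admissible pair $(E,F)$ and an optimal transport map $\Phi\colon E\to F$ (which exists by \Cref{existence of optimal transport maps}), set $E_-:=\{x\in E:|\Phi(x)-x|\le C_n|E|^{1/n}\}$, $E_+:=E\setminus E_-$, and keep the partial pairing $\Phi\colon E_-\to\Phi(E_-)$. Partitioning $\RR^n$ into cubes $Q$ of volume $3|E|$ — so any two points of $Q$ lie within $\sqrt n\,(3|E|)^{1/n}=C_n|E|^{1/n}$ of each other — one has $|Q\setminus(E\cup\Phi(E_-))|\ge 3|E|-|E|-|E_-|\ge|E|\ge|E_+\cap Q|$, so I may pick $D_Q\subset Q\setminus(E\cup\Phi(E_-))$ with $|D_Q|=|E_+\cap Q|$ and transport $E_+\cap Q$ onto $D_Q$. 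Then $\tilde F:=\Phi(E_-)\cup\bigcup_Q D_Q$ is admissible, lies in $K\setminus E$, and, since $|\Phi(x)-x|>C_n|E|^{1/n}$ on $E_+$,
\[
W_p^p(E,\tilde F)\;\le\;\int_{E_-}|x-\Phi(x)|^p\,dx+\big(C_n|E|^{1/n}\big)^p|E_+|\;\le\;\int_E|x-\Phi(x)|^p\,dx\;=\;W_p^p(E,F).
\]
Hence $\mathcal W_p(E)=\inf\{W_p(E,F):F\subset K\setminus E,\ |F|=|E|\}$.

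Next I would relax and run the direct method. Let $\mathcal A:=\{f\in L^\infty(\RR^n):0\le f\le\mathbbm{1}_{K\setminus E},\ \int f\,dx=|E|\}$, which is convex and weak-$*$ compact, is nonempty by the construction in \Cref{basic properties lemma}(iv), and whose extreme points are precisely the indicators $\mathbbm{1}_F$ with $F\subset K\setminus E$, $|F|=|E|$ (bathtub principle). On $\mathcal A$ the functional $\Psi(f):=\K_{c_p}(\mathcal L^n\mres E,f\mathcal L^n)$ is convex (by averaging transport plans) and, since all relevant measures live in the compact set $K$ and $W_p$ metrizes weak convergence there, continuous along weak-$*$ convergent sequences of densities; the direct method therefore produces a minimizer $f^\ast\in\mathcal A$ with $\Psi(f^\ast)=\inf_{\mathcal A}\Psi\le\mathcal W_p^p(E)$. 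The crux of (i) is to upgrade $f^\ast$ to an indicator, and here I would use the first-order optimality condition: there is a Kantorovich potential $\psi^\ast$ for the second marginal of an optimal plan from $\mathcal L^n\mres E$ to $f^\ast\mathcal L^n$ with $\int\psi^\ast(f-f^\ast)\,dx\ge0$ for all $f\in\mathcal A$, i.e.\ $f^\ast$ minimizes the \emph{linear} functional $f\mapsto\int\psi^\ast f$ over $\mathcal A$. By the bathtub principle $f^\ast$ then equals $1$ on $\{\psi^\ast<c\}$ and $0$ on $\{\psi^\ast>c\}$ for the multiplier $c$ fixing the mass, so any diffuseness of $f^\ast$ is confined to the critical level set $S:=\{\psi^\ast=c\}$; and since $\psi^\ast\equiv c$ on $S$, the identity $\varphi^\ast(x)+\psi^\ast(y)=|x-y|^p$ on the support of an optimal plan shows that the cost of delivering mass into $S$ is unchanged if that mass is rearranged at density $\le1$ onto a subset $F_0\subset S$. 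Replacing $f^\ast\mres S$ by $\mathbbm{1}_{F_0}$ yields an indicator $\mathbbm{1}_{F^\ast}\in\mathcal A$ with $\Psi(\mathbbm{1}_{F^\ast})\le\Psi(f^\ast)\le\mathcal W_p^p(E)$; as $F^\ast$ is itself admissible, $\Psi(\mathbbm{1}_{F^\ast})=W_p^p(E,F^\ast)\ge\mathcal W_p^p(E)$, whence $W_p(E,F^\ast)=\mathcal W_p(E)$, and an optimal map is furnished by \Cref{existence of optimal transport maps}. This redistribution-on-$S$ step is the delicate one and is exactly \cite[Theorem 3.21]{Butt}, which I would ultimately cite.

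Finally, for (ii), let $F$ be the minimizer from (i) and $\Phi$ an optimal map $E\to F$. If $\delta:=|\{x\in E:|\Phi(x)-x|>R\}|>0$ for some $R>C_n|E|^{1/n}$, apply the construction above verbatim (redefining $E_\pm$ with this $R$, but still using cubes of volume $3|E|$) to get an admissible $\tilde F$ with, writing $A:=\int_{E_-}|x-\Phi(x)|^p\,dx$,
\[
W_p^p(E,\tilde F)\;\le\;A+\big(C_n|E|^{1/n}\big)^p\delta\;<\;A+R^p\delta\;\le\;\int_E|x-\Phi(x)|^p\,dx\;=\;\mathcal W_p^p(E),
\]
contradicting minimality of $F$; letting $R\downarrow C_n|E|^{1/n}$ gives $|\Phi(x)-x|\le C_n|E|^{1/n}$ for a.e.\ $x\in E$, which is (ii). I expect the main obstacle to be the de-relaxation in (i) — ruling out (or absorbing) diffuse mass of the relaxed minimizer on the critical level set — since everything else reduces to the direct method and elementary volume and comparison bookkeeping.
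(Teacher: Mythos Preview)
The paper does not prove this theorem; it is stated as a preliminary result and simply cites \cite[Theorem~3.21]{Butt} for (i) and \cite[Lemma~4.3]{XZ21} for (ii). So there is no ``paper's own proof'' to compare against --- you are supplying an argument where the authors chose to quote one.

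Your sketch is largely sound, and part (ii) is correctly and completely argued: the cut-and-paste with cubes of volume $3|E|$ gives a strictly cheaper competitor whenever $|\{|\Phi-x|>R\}|>0$ for some $R>C_n|E|^{1/n}$, which is exactly the mechanism in \cite{XZ21}. For part (i), the reduction to targets in $K$ and the relaxation to $\mathcal A=\{0\le f\le\mathbbm 1_{K\setminus E},\ \int f=|E|\}$ are fine, and the direct method indeed yields a relaxed minimizer $f^\ast$. The gap is in your de-relaxation: from $\psi^\ast\in\partial\Psi(f^\ast)$ and the bathtub description you correctly conclude that any diffuseness of $f^\ast$ is confined to the level set $S=\{\psi^\ast=c\}$, but the claim that ``the cost of delivering mass into $S$ is unchanged under rearrangement'' does not follow from $\varphi^\ast(x)+\psi^\ast(y)=|x-y|^p$ on $\spt\gamma$. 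Duality only gives $\Psi(\mathbbm 1_{F^\ast})\ge\Psi(f^\ast)+\int\psi^\ast(\mathbbm 1_{F^\ast}-f^\ast)=\Psi(f^\ast)$, the wrong inequality; equality would require $(\varphi^\ast,\psi^\ast)$ to remain optimal for the \emph{new} marginal, which is not automatic. A clean fix that avoids this entirely: the set $M\subset\mathcal A$ of minimizers of the convex, weak-$\ast$ lsc functional $\Psi$ is a nonempty, weak-$\ast$ compact, convex \emph{face} of $\mathcal A$ (if $f\in M$ and $f=\tfrac12(g+h)$ with $g,h\in\mathcal A$, convexity forces $g,h\in M$); by Krein--Milman $M$ has an extreme point, and extreme points of a face are extreme in $\mathcal A$, hence indicators. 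You rightly flag this step as delicate and defer to \cite{Butt}, which is effectively what the paper does too.
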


\medskip

\begin{remark}[Additivity of $\mathcal{W}_p^p$]\label{additivity of script W}
Arguing directly from items $(i)$ and $(ii)$ of the above theorem, it follows that if $E_1, \dots, E_K$ are bounded sets such that 
\begin{equation}\notag
\dist(E_k,E_{k'})\geq 2C_n \max_{1\leq j\leq K} |E_j|^{\frac{1}{n}} \quad \textup{for }k\neq k',
\end{equation}
then the sets $F_k$ minimizing $W_p(E_k,F_k)$ are pairwise disjoint and
\begin{equation}\notag
    W_p^p\left(\bigcup_k E_k,\bigcup_k F_k\right) = \mathcal{W}^p_p\left(\bigcup_{k=1}^K E_k\right) = \sum_{k=1}^K \mathcal{W}^p_p (E_k)= \sum_{k=1}^K W^p_p(E_k,F_k) .
\end{equation}
\end{remark}

\bigskip
\section{Proof of \Cref{existence theorem}}\label{sec: proof}

We write the main functional as
\begin{equation}\label{the unconstrained functional}
   \mathcal{G}(E):= P(E) + \lambda \mathcal{W}_p(E),
\end{equation}
where $\mathcal{W}_p$ is given as in \Cref{definition of script W}.

\medskip

\begin{proof}[Proof of \Cref{existence theorem}] We prove this theorem in multiple steps.

\medskip

\noindent\textit{Step one:} First, we extract a nontrivial set $E^0$ which is the limit of sets $E_m$ corresponding to a minimizing sequence $\{E_m\}_m$ with
\begin{equation}\label{almost minimality}
    P(E_m) + \lambda \mathcal{W}_p (E_m) \leq \inf \mathcal{G} + \frac{1}{m}.
\end{equation}
From this inequality we have the immediate upper bound
\begin{equation}\label{perimeter upper bound}
    P(E_m) \leq 1+ \inf\mathcal{G}.
\end{equation}
on the perimeters. Since in addition $|E_m|=1$ for all $m$, we may then apply the nucleation lemma and compactness as in \Cref{nucleation/compactness remark}. Therefore, up to a subsequence which we do not relabel and translations which, without loss of generality, are trivial, there exists a set $E^0$ with
\begin{gather}\notag
   0  \xupref{volume of E on B1}{<} |E^0|  \leq 1, \\ \notag
    E_m \overset{\loc}{\to}\, E^0 \quad\textup{in }\mathbb{R}^n.
\end{gather}

\medskip

\noindent \textit{Step two:} Here we identify $\delta$, $\alpha >0$ such that if $\tilde{E}_m$ is any set with $|E_m \triangle \tilde{E}_m|\leq \delta$, then 
\begin{equation}\label{symmetric difference inequality}
     \left|\mathcal{W}_p(E_m) - \mathcal{W}_p(\tilde{E}_m) \right| \leq C_{n,p} \alpha^{1-p}  |E_m \triangle \tilde{E}_m|.
\end{equation}
We first observe that by the uniform perimeter bound and \Cref{vanishing wasserstein implies diverging perimeters} we can consider
\begin{equation}\label{wasserstein lower bound}
    \alpha := \inf_m \mathcal{W}_p(E_m) >0.
\end{equation}
Due to the continuity of $\mathcal{W}_p$ with respect to $L^1$-convergence from \eqref{L1 control to the p}, we may choose $0<\delta\leq 1$ small enough so that if $|E_m \triangle \tilde{E}_m| \leq \delta$,
\begin{align}\notag
    \mathcal{W}_p^p(\tilde{E}_m) &\geq \mathcal{W}_p^p(E_m) - C_{n,p}\max \{|E_m|^{\frac{p}{n}},|\tilde{E}_m|^{\frac{p}{n}} \} |E_m \triangle \tilde{E}_m| \\ \notag
    &\geq \alpha^p - C_{n,p}(1+ \delta)^{\frac{p}{n}}\delta\\ \notag
    &\geq \frac{\alpha^p}{2^p}.
\end{align}
Then since $\mathcal{W}_p(E_m)$ and $\mathcal{W}_p(\tilde{E}_m)$ are both bounded from below by $\alpha/2$, \eqref{L1 control} gives
\begin{align}\notag
    \left|\mathcal{W}_p(\tilde{E}_m) - \mathcal{W}_p(E_m) \right| &\leq C_{n,p} \max\{\mathcal{W}_p^{1-p}(E_m),\mathcal{W}_p^{1-p}(\tilde{E}_m) \}\max \{|E_m|^{\frac{p}{n}},|\tilde{E_m}|^{\frac{p}{n}} \}|E_m\triangle \tilde{E}_m| \\ \notag
    &\leq C_{n,p}\alpha^{1-p}2^{p-1}(1+\delta)^{\frac{p}{n}}|E_m \triangle \tilde{E}_m|.
\end{align}
Upon recalling that $\delta\leq 1$ and modifying $C_{n,p}$, we have shown \eqref{symmetric difference inequality}.

\medskip

\noindent \textit{Step three:} In this step, we utilize \eqref{symmetric difference inequality} and \Cref{volume fixing lemma}, the volume-fixing variations lemma, to show that there exists $r_0$ and $\Lambda>0$ such that for all $m$ large enough, $E_m$ satisfies the inequality
\begin{equation}\label{almost perimeter minimality of E_m}
    P(E_m) \leq P(\tilde{E}_m) + \Lambda|E_m \triangle \tilde{E}_m| + \frac{1}{m}\qquad \textup{if } E_m \triangle \tilde{E}_m \subset\!\subset B(x,r),\ 0<r<r_0.
\end{equation}

Fix $x$ and consider $\tilde{E}_m$ with $E_m \triangle \tilde{E}_m \subset B(x,r)$, with $r<r_0$ to be determined shortly. Since $|\tilde{E}_m|$ is not necessarily 1, we proceed using \Cref{volume fixing lemma}. Let $y_1$, $y_2\in\pa^\ast (E^0)$ and $\eta>0$ be such that
\begin{equation*}
    \hn ( \pa^\ast (E^0) \cap B(y_i, \eta)) >0
\end{equation*}
for $i=1,2$ and
\begin{equation}\notag
    B(y_1, \eta) \cap B(y_2, \eta) = \emptyset.
\end{equation}
We apply the volume-fixing variations lemma with the choice of $A=B(y_i,\eta)$, yielding $\sigma_0$ and $C_0$ such that for any $|\sigma|<\sigma_0$ and $i=1,2$, there exists $G_m^i$ with $G_m^i \triangle E_m \subset\!\subset B(y_i,\eta)$ and
\begin{gather}\notag
    |G_m \cap B(y_i,\eta)| = |E_m\cap B(y_i,\eta)| + \sigma, \\ \label{volume fixing consequences}
    |G_m \triangle E_m| \leq C_0|\sigma|, \quad\textup{and}\quad|P(G_m;B(y_i,\eta))- P(E_m;B(y_i,\eta))| \leq C_0|\sigma|.
\end{gather}
Up to further decreasing $\sigma_0$, we may assume that
\begin{equation}\label{sigma0 bound}
    \max\{1,C_0\}\sigma_0 < \delta/2.
\end{equation}
Choose $r_0$ such that 
\begin{equation}\label{choice of r1}
\omega_n r_0^n < \sigma_0
\end{equation}
and for every $z\in \mathbb{R}^n$, $B(z,r_0)$ is disjoint from at least one of $B(y_i,\eta)$. Therefore, for at least one of $i=1,2$,
\begin{equation*}
    B(x,r) \cap B(y_i, \eta) = \emptyset;
\end{equation*}
let us assume without loss of generality that it is $y_1$. We introduce the sets
\begin{equation}\label{def of volumed fixed competitor}
    \overline{E}_m = (E_m \cap B(x,r)^c \cap B(y_1,\eta)^c) \cup ((G_m^1\cap B(y_1, \eta))\cup (\tilde{E}_m \cap B(x,r)),
\end{equation}
where $G_m^1$ is chosen according to \Cref{volume fixing lemma} with
\begin{equation}\label{sigmam bound}
\sigma_m:=|E_m\cap B(x,r)|-|\tilde{E}_m \cap B(x,r)|\in(-\omega_n r^n,\omega_n r^n),
\end{equation}
so that 
\begin{equation}\notag
    |G_m^1\cap B(y_1,\eta)| = |E_m\cap B(y_1,\eta)| +|E_m\cap B(x,r)|- |\tilde{E}_m \cap B(x,r)|.
\end{equation}
This ensures that
\begin{align*}\notag
|\overline{E}_m| &=   |E_m| - |E_m\cap B(x,r)| - |E_m \cap B(y_1,\eta)|  +|G_m^1\cap B(y_1, \eta)|+|\tilde{E}_m \cap B(x,r)|\\
&= |E_m| - |E_m\cap B(x,r)| - |E_m \cap B(y_1,\eta)|  + |E_m \cap B(y_1, \eta)| +|E_m\cap B(x,r)| \\ \notag
&\qquad- |\tilde{E}_m \cap B(x,r)|+ |\tilde{E}_m \cap B(x,r)|  \\
&=|E_m|\\ \notag
&=1.
\end{align*}
By the triangle inequality and the formula $\sigma_m=|E_m \cap B(x,r)|- |\tilde{E}_m \cap B(x,r)|$, the bound
\begin{equation}\label{symmetric difference bound for sigma}
    |\sigma_m| \leq |E_m \triangle \tilde{E}_m|
\end{equation}
holds as well. Furthermore, with the aid of \eqref{volume fixing consequences}--\eqref{choice of r1}, we may estimate $|\overline{E}_m \triangle E_m|$ by
\begin{align}\label{L1 estimate by symmetric difference}
    |\overline{E}_m \triangle E_m| &= |G_m^1 \triangle E_m| + |\tilde{E}_m \triangle E_m| \\ \notag
    &\leq C_0|\sigma_m| + \omega_n r^n \\ \notag
    &< \delta/2 + \delta/2.
\end{align}
The previous inequality implies that \eqref{symmetric difference inequality} holds for $E_m$ and $\overline{E}_m$, in which case
\begin{equation}\notag
     \left|\mathcal{W}_p(E_m) - \mathcal{W}_m(\overline{E}_m) \right| \leq C_{n,p} \alpha^{1-p}  |E_m \triangle \overline{E}_m|.
\end{equation}
Combining \eqref{L1 estimate by symmetric difference} and the fact that $|\sigma_m| \leq |E_m\triangle \tilde{E}_m|$, we have
\begin{equation}\label{symm diff bound}
     \left|\mathcal{W}_p(E_m) - \mathcal{W}_m(\overline{E}_m) \right| \leq C_{n,p} \alpha^{1-p} (C_0 + 1)|\tilde{E}_m \triangle E_m|.
\end{equation}
The last preliminary estimate before deriving \eqref{almost perimeter minimality of E_m} is a consequence of \eqref{volume fixing consequences} and \eqref{sigmam bound}:
\begin{align}\label{perimeter bound by symm diff}
| P(E_m;B(y_1,\eta)) - P(\overline{E}_m;B(y_1,\eta)) | \leq  C_0 |\sigma_m|  \leq C_0 |\tilde{E}_m \triangle E_m|.
\end{align}
Finally, since $|\overline{E}_m|=1$, we may test \eqref{almost minimality} with $\overline{E}_m$ and use \eqref{def of volumed fixed competitor}, \eqref{symm diff bound}, and \eqref{perimeter bound by symm diff} to obtain
\begin{align}\notag
    P(E_m)&\leq P(\overline{E}_m) + \lambda\mathcal{W}_p(\overline{E}_m) - \lambda\mathcal{W}_p(E_m)+\frac{1}{m} \\ \notag
    &=P(\tilde{E}_m;B(y_1,\eta)^c) +P(\overline{E}_m;B(y_1,\eta))-P(E_m;B(y_1,\eta)) + P(\tilde{E}_m;B(y_1,\eta)) \\ \notag
    &\qquad+ \lambda\mathcal{W}_p(\overline{E}_m) - \lambda\mathcal{W}_p(E_m)+\frac{1}{m}\\ \notag
    &\leq P(\tilde{E}_m) +C_0|\tilde{E}_m \triangle E_m|+ \lambda C_{n,p} \alpha^{1-p} (C_0 + 1)|\tilde{E}_m \triangle E_m|+\frac{1}{m}.
\end{align}
Taking $\Lambda:= C_0 + \lambda C_{n,p}\alpha^{1-p}(C_0+1)$, we have shown \eqref{almost perimeter minimality of E_m}.

\medskip

\noindent \textit{Step four:} Here we use \eqref{almost perimeter minimality of E_m} to prove that there exist $C_n$, $r_1>0$ such that \textit{any} positive measure set $E$ which is the $L^1_\loc$-limit of translates $E_m-y_m$ for a sequence $\{ y_m\}$ satisfies:
\begin{equation}\label{lower density estimate}
    |E \cap B(x,r) | \geq C_n r^n \quad \forall x \in \partial^\ast E,\, r<r_1.
\end{equation}
Since $|E|\leq 1$, such a lower density estimate implies that $\partial^* E$ and $E$ are bounded. For the proof of \eqref{lower density estimate}, to simplify the notation, assume that $y_m=0$ for all $m$.\par
We set
\begin{equation}\notag
    u_m(r) = |E_m \cap B(x,r)|,\quad u(r) = |E\cap B(x,r)|.
\end{equation}
The coarea formula implies that for almost every $r$,
\begin{equation}\notag
    u_m '(r) = \hn ( E_m\cap \partial B(x,r))\quad \textup{and}\quad u'(r) = \hn (E \cap \partial B(x,r)),
\end{equation}
while the $L^1_\loc$-convergence of $E_m$ to $E$ permits us to extract a subsequence such that
\begin{equation}\label{convergence of surface measures}
    u_m'(r) \to u'(r) \quad\textup{for almost every }r.
\end{equation}
Furthermore, except for a measure zero set of $r$ values which can be made independent of $m$, we have the identities
\begin{align}\label{perimeter of intersection with a ball}
    P(E_m ) &= P(E_m;B(x,r)) + P(E_m;\overline{B(x,r)}^c), \\ \notag
    P(E_m \cap B(x,r)) &= P(E_m;B(x,r)) + \hn (E_m \cap \partial B(x,r)),\\ \notag
    P(E_m \setminus B(x,r)) &= P(E_m;\overline{B(x,r)}^c) + \hn (E_m \cap \partial B(x,r)),
\end{align}
and similarly for $E$. Therefore, for almost every $r<r_1$, with $r_1\in (0,r_0)$ to be fixed shortly, testing \eqref{almost perimeter minimality of E_m} with $\tilde{E}_m= E_m \setminus B(x,r)$ yields
\begin{align}\notag
    P(E_m;& B(x,r)) + P(E_m ; \overline{B(x,r)}^c) \\ \notag
    &= P(E_m) \\ \notag
    &\leq P(E_m \setminus B(x,r)) + \Lambda |E_m \cap B(x,r)| + \frac{1}{m} \\ \notag
    &= P(E_m;\overline{B(x,r)}^c) + \hn(E_m \cap \partial B(x,r)) + \Lambda|E_m \cap B(x,r)| + \frac{1}{m}.
\end{align}
We add $\hn(E_m \cap \partial B(x,r))-P(E_m ; \overline{B(x,r)}^c)$ to both sides, arriving at  
\begin{multline}\notag
    P(E_m;B(x,r)) + \hn(E_m \cap \partial B(x,r)) \\ 
    \leq 2\hn(E_m \cap \partial B(x,r)) + \Lambda|E_m \cap B(x,r)| + \frac{1}{m}.
\end{multline}
The Euclidean isoperimetric inequality and \eqref{perimeter of intersection with a ball} imply that for almost every $r<r_1$,
\begin{align}\notag
    n \omega_n^{\frac{1}{n}} u_m(r)^{\frac{n-1}{n}} &= n \omega_n^{\frac{1}{n}} |E_m \cap B(x,r)|^{\frac{n-1}{n}} \\ \notag
    &\leq P(E_m \cap B(x,r)) \\ \notag
    &= P(E_m;B(x,r)) + \hn(E_m \cap \partial B(x,r)) \\ \notag
    &\leq 2\hn(E_m \cap \partial B(x,r)) + \Lambda|E_m \cap B(x,r)| + \frac{1}{m} \\ \label{differential inequality}
    &= 2u_m'(r) + \Lambda u_m + \frac{1}{m}.
\end{align}
With the goal of absorbing $\Lambda u_m$ into the left hand side, we note that
\begin{equation}\notag
    \Lambda u_m \leq \frac{n \omega_n^{\frac{1}{n}} u_m(r)^{\frac{n-1}{n}}}{2} \iff u_m \leq \left(\frac{n\omega_n^{1/n}}{2\Lambda} \right)^n.
\end{equation}
Therefore, choosing $r_1\in (0,r_0)$ small enough so that 
\begin{equation}\notag
u_m\leq \omega_n r_1^n \leq\left(\frac{n\omega_n^{1/n}}{2\Lambda} \right)^n,
\end{equation}
we have for almost every $r<r_1$
\begin{equation}\notag
    \Lambda u_m \leq \frac{n \omega_n^{\frac{1}{n}} u_m(r)^{\frac{n-1}{n}}}{2}.
\end{equation}
Plugging this into the differential inequality \eqref{differential inequality} and passing to the limit $m\to \infty$ using \eqref{convergence of surface measures}, we may write
\begin{align*}
    \frac{n \omega_n^{\frac{1}{n}} u(r)^{\frac{n-1}{n}}}{2} &= \lim_{m\to \infty} \frac{n \omega_n^{\frac{1}{n}} u_m(r)^{\frac{n-1}{n}}}{2} \\ 
    &\leq \lim_{m\to \infty} 2u_m'(r) + \frac{1}{m}\\
    &= 2 u'(r)
\end{align*}
for almost every $r<r_1$. The lower density estimate \eqref{lower density estimate} is achieved by {dividing by $u^{\frac{n-1}{n}}$} integrating this inequality.

\medskip

\noindent \textit{Step five:} In this step, we obtain finitely many, bounded, limiting sets $E^k$ and sequences $\{\xmk\}$ such that $E^k$ are $L^1_\loc$-limits of translates $E_m - x_m^k$ and satisfy
\begin{equation}\label{no volume loss}
    \sum_k |E^k|=1.
\end{equation}
To this end, apply the nucleation lemma again to $E_m$ with
\begin{equation}\notag
 \eps_0 = \min \left\{1, \frac{1+\inf \mathcal{G}}{2nc(n)}, C_nr_1^n \right\},    
\end{equation}
where $C_n$ is the dimensional constant from the previous step, to locate points $x_m^i$, $1\leq i \leq I(m)$ satisfying the conclusions of \Cref{nucleation lemma}. Here we include $C_n r_1^n$ in the definition of the constant $\eps_0$ as we would like to control the size of what is not contained in the balls obtained from the nucleation lemma. If the remainder is non-empty, its smallness will then lead to a contradiction with the lower density estimates.

The uniform bound \eqref{i bound} on $I(m)$ in terms of $P(E_m)$, $|E_m|$, and $\eps_0$ implies that by restricting to a further subsequence, we can find $I\in \mathbb{N}$ such that $I(m)=I$ for each $m$. After passing to a further subsequence, we may safely assume that
\begin{equation}\notag
    \lim_{m \to \infty} |x_m^i - x_m^j| =: d_{ij}
\end{equation}
exists for each pair $(i,j) \in I \times I$, with infinity as a possible limit, too. Next, we define equivalence classes of $\{1,\dots, I\}$ based on the relation
\begin{equation}\notag
    i \equiv j \iff d_{ij} < \infty.
\end{equation}
Let $K \leq I$ be the number of these equivalence classes, which partition $\{1,\dots, I\}$. For each $1\leq k \leq K$ and $m\in \mathbb{N}$, let $x_m^k:=x_m^{i(k,m)}$ be a point from the family of points corresponding to $E_m$ such that $i(k,m)$ is a representative of the $k$-th equivalence class. Recall that due to \eqref{lower bound on i ball} and \eqref{perimeter upper bound}, $E_m - \xmk$ satisfies
\begin{equation}\notag
    | (E_m - \xmk) \cap B(0,1) | \geq  \left(c(n) \frac{\eps_0}{P(E_m)} \right)^n\geq \left(c(n) \frac{\eps_0}{1+\inf \mathcal{G}} \right)^n.
\end{equation}
We can therefore find non-trivial sets of finite perimeter $E^k$ such that
\begin{equation}\label{convergence to E^k}
    E_m - \xmk \overset{\loc}{\to} E^k.
\end{equation}
Since the previous step implies that each $E^k$ is bounded, there exists $R_0$ such that
\begin{equation}\label{ball containing E^k}
    E^k \subset\!\subset B(0,R_0)
\end{equation}
for each $1\leq k \leq K$. We may also take $R_0$ to be large enough that 
\begin{equation}\label{large ball contains equivalence classes}
    \bigcup_{i\in\{1,\dots, I\}\colon i\equiv k} B(x_m^i,2) \subset  B(\xmk, R_0) 
\end{equation}
for all $m$; in other words $B(\xmk,R_0)$ contains all the balls at the $m$-th stage with indices in the same equivalence class as $k$.
\par
It remains to show that
\begin{equation}\notag
    \sum_{k=1}^K |E^k| =1.
\end{equation}
We first show that $\sum_{k=1}^K |E^k|\leq 1$. If this were not the case, then 
\begin{equation}\label{vol of E_k on ball}
    \sum_{k=1}^K |E^k \cap B(0,R_0)| > 1.
\end{equation}
Now for large $m$, the sets
\begin{equation*}
    E_m \cap B(\xmk, R_0)
\end{equation*}
are pairwise disjoint since $|\xmk - x_m^{k'}|\to \infty$ if $k\neq k'$. By \eqref{convergence to E^k} and \eqref{vol of E_k on ball}, it follows that
\begin{equation*}
    \sum_{k=1}^K |E_m \cap B(\xmk,R_0)|>\frac{1}{2}\left(1+\sum_{k=1}^K |E^k \cap B(0,R_0)|\right)
\end{equation*}
for large $m$, which is impossible since $ |E_m| = 1$. So
\begin{equation*}
    \sum_{k=1}^K |E^k|  \leq 1.
\end{equation*}

Assume now for a contradiction that
\begin{equation}\notag
    \sum_{k=1}^K |E^k| = 1-\delta
\end{equation}
for some $\delta>0$. Since $E^k\subset\!\subset B(0,R_0+2)$ and $E_m \cap B(\xmk, R_0+2)$ are disjoint for large enough $m$, it must then be the case that
\begin{equation}\notag
    \left|E_m \setminus \left(\bigcup_k B(\xmk,R_0+2)\right)\right| \geq \frac{\delta}{2}
\end{equation}
for large enough $m$. At the same time, the nucleation lemma at the beginning of this step with $\eps_0\leq C_n r_1^n$ gave
\begin{equation}\notag
    \left|E_m \setminus \bigcup_{1\leq i\leq I} B(x_m^i,2) \right| < \eps_0 \leq C_n r_1^n.
\end{equation}
Together with the assumption \eqref{large ball contains equivalence classes} that $\bigcup_i B(x_m^i,2) \subset \bigcup_k B(\xmk,R_0)$, this yields
\begin{equation}\label{small measure at mth stage}
  \frac{\delta}{2}\leq  \left|E_m \setminus \left(\bigcup_k B(\xmk,R_0+2)\right)\right|  < C_n r_1^n.
\end{equation}
Applying the nucleation lemma a final time to the sets $ E_m \setminus\left(\bigcup_k B(\xmk,R_0+2)\right)$, we obtain finitely many points $y_m^j$ fulfilling the conclusions of \Cref{nucleation lemma}. We claim that it must be the case that
\begin{equation}\label{divergence from previous balls}
    |y_m^j - \xmk|\to \infty.
\end{equation}
If $\limsup_{m \to \infty} |y_m^j - \xmk|<\infty$, then the uniform bound from below on $|B(y_m^j,1) \cap E_m|$ and the fact that $y_m^j \notin B(\xmk, R_0+1)$ would imply that $E^k\cap B(0,R_0)^c\neq \emptyset$. However, this contradicts \eqref{ball containing E^k}. Next, by the compactness for sets of finite perimeter and the fourth step, we may find a measurable set $E$ and $R_1>0$ such that $E \subset\!\subset B(0,R_1)$, $E_m - y_m^1  \overset{\loc}{\to} E$, and
\begin{equation}\label{measure too large}
|E| \geq C_n r_1^n .
\end{equation}
Since $E$ is compactly supported, $|E_m \cap B( y_m^1,R_1)| \to |E| $. But \eqref{divergence from previous balls} implies that $B(y_m^1, R_1) \subset \left(\bigcup_k B(\xmk,R_0+2)\right)^c$ for large $m$, and hence
\begin{align*}
    |E| &= \lim_{m\to \infty}| E_m \cap B(y_m^1, R_1)| \\
    & \leq \limsup_{m\to \infty}\left|E_m \setminus \left(\bigcup_k B(\xmk,R_0+2)\right)\right| \\
    &\hspace{-.24cm}\xupref{small measure at mth stage}{<}
    C_n r_1^n.
\end{align*}
This upper bound is at odds with the lower bound \eqref{measure too large}, so we have derived a contradiction. Thus $\sum_{k=1}^K |E^k| = 1$.

\medskip

\noindent \textit{Step six:} At last we can prove \Cref{existence theorem}. Let us choose any $K$ points $z_1,\dots, z_K\in \mathbb{R}^n$ such that $B(z_k, R_0+C_n)$ are pairwise disjoint, where $C_n$ is the dimensional constant from \Cref{Buttazzo existence theorem}(ii). We claim that
\begin{equation}\notag
    \bigcup_{k=1}^K E^k + z_k
\end{equation}
is a minimizer. The choice of radius $R_0+C_n$ and \Cref{additivity of script W} ensure that the sets $F^k$ defined by
\begin{equation*}
    \mathcal{W}_p(E^k+z_k) = W_p(E^k+z_k, F^k)
\end{equation*}
are pairwise disjoint and 
\begin{equation*}
    \mathcal{W}_p^p\left( \bigcup_{k=1}^K E^k + z_k\right) = \sum_{k=1}^K \mathcal{W}_p^p(E^k).
\end{equation*}
Appealing to the continuity result \Cref{L1 control of script W} gives
\begin{align}\notag
   \lambda \mathcal{W}_p^p\left(\bigcup_{k=1}^K E^k + z_k\right)&=\lambda \sum_{k=1}^K \mathcal{W}^p_p(E^k) \\ \notag
    &=\lambda \lim_{m\to \infty} \sum_{k=1}^K \mathcal{W}^p_p((E_m-\xmk) \cap B(0, R_0+C_n))\\ \label{Wp lsc}
    &\leq \lambda\liminf_{m\to \infty} \mathcal{W}^p_p(E_m),
\end{align}
where the last inequality depends on \Cref{additivity of script W}, the additivity of $\mathcal{W}_p^p$ (which applies since the distance between the $x_m^k$'s goes to infinity as $m\to \infty$). Next, the inequality
\begin{align}\notag
    \sum_{k=1}^K P(E^k+z_k) &= \sum_{k=1}^K P(E^k; B(0,R_0))\\ \notag
    &\leq \liminf_{m\to \infty} \sum_{k=1}^K P(E_m-\xmk ;B(0,R_0))\\ \label{perimeter lsc}
    &\leq \liminf_{m\to\infty} P(E_m)
\end{align}
is immediate from the lower-semicontinuity of perimeter under $L^1$-convergence and the pairwise disjointness again. Summing \eqref{Wp lsc} and \eqref{perimeter lsc} finishes the proof, since $E_m$ is a minimizing sequence and $\left|\bigcup_{k=1}^K E^k + z_k\right|=1$.
\end{proof}

\bigskip

{ As a byproduct of our existence proof we obtain that the set $E$ in a minimizing pair $(E,F)$ is a quasiminimizer of the perimeter in the following sense; hence, enjoys some regularity properties.}

\begin{corollary}
For any minimizing pair $(E,F)$ to \eqref{the functional}, the set $E$ is a $(\Lambda,r_0)$-perimeter minimizer in $\mathbb{R}^n$. That is, there exists $0\leq \Lambda<\infty$ and $r_0>0$ such that 
\begin{equation}\notag
    P(E) \leq P(\tilde{E}) + \Lambda|E \triangle \tilde{E}|\qquad \textup{if } E \triangle \tilde{E} \subset\!\subset B(x,r),\, 0<r<r_0.
\end{equation}
\end{corollary}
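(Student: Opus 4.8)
The plan is to re-run Steps two and three of the proof of \Cref{existence theorem} with the genuine minimizer $E$ in place of the minimizing sequence $\{E_m\}$; since there is now no approximation error, the $1/m$ terms disappear and no extraction of subsequences or translations is needed. First I would record that $E$ minimizes the functional $\mathcal{G}$ of \eqref{the unconstrained functional} among unit-volume sets and that the optimal transport target is attained: if $(E,F)$ is a minimizing pair for \eqref{the functional}, then $P(E)+\lambda W_p(E,F)=\inf\mathcal{G}\le\mathcal{G}(E)=P(E)+\lambda\mathcal{W}_p(E)\le P(E)+\lambda W_p(E,F)$, forcing equality throughout, so $\mathcal{G}(E)=\inf\mathcal{G}$ and $W_p(E,F)=\mathcal{W}_p(E)$.

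Next, put $\alpha:=\mathcal{W}_p(E)$, which is strictly positive by \Cref{basic properties lemma}(ii) since $|E|=1$. Exactly as in Step two, the continuity estimate \eqref{L1 control to the p} supplies $\delta\in(0,1]$ such that any $\widehat E$ with $|E\triangle\widehat E|\le\delta$ has $\mathcal{W}_p^p(\widehat E)\ge\alpha^p/2^p$, and then \eqref{L1 control} yields $|\mathcal{W}_p(\widehat E)-\mathcal{W}_p(E)|\le C_{n,p}\,\alpha^{1-p}\,|E\triangle\widehat E|$. Because $P(E)\ge n\omega_n^{1/n}|E|^{(n-1)/n}>0$ and $\mathcal{H}^{n-1}\mres\partial^\ast E$ is non-atomic, its support contains at least two distinct points; shrinking $\eta>0$ I may fix $y_1,y_2\in\partial^\ast E$ with $\mathcal{H}^{n-1}(\partial^\ast E\cap B(y_i,\eta))>0$ for $i=1,2$ and $|y_1-y_2|>4\eta$, so that $B(y_1,\eta)$ and $B(y_2,\eta)$ are disjoint and every ball $B(x,\eta)$ is disjoint from at least one of them. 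Applying \Cref{volume fixing lemma} to the constant sequence $E_m\equiv E$ with $A=B(y_i,\eta)$ produces $\sigma_0>0$ and $C_0<\infty$; after shrinking $\sigma_0$ so that $\max\{1,C_0\}\sigma_0<\delta/2$, I fix $r_0\in(0,\eta)$ with $\omega_n r_0^n<\sigma_0$.

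For the main estimate, let $\tilde E$ satisfy $E\triangle\tilde E\subset\!\subset B(x,r)$ with $r<r_0$, and choose (say) $i=1$ with $B(x,r)\cap B(y_1,\eta)=\emptyset$. Set $\sigma:=|E\cap B(x,r)|-|\tilde E\cap B(x,r)|$, so that $|\sigma|\le\omega_n r^n<\sigma_0$ and $|\sigma|\le|E\triangle\tilde E|$, let $G^1$ be the competitor from \Cref{volume fixing lemma} for this $\sigma$, and define the unit-volume set $\overline E:=(E\cap B(x,r)^c\cap B(y_1,\eta)^c)\cup(G^1\cap B(y_1,\eta))\cup(\tilde E\cap B(x,r))$ as in \eqref{def of volumed fixed competitor}. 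The volume computation of Step three gives $|\overline E|=|E|=1$, and since $\overline E$ differs from $E$ only on the disjoint regions $B(x,r)$ (where it equals $\tilde E$) and $B(y_1,\eta)$ (where it equals $G^1$), one gets $|\overline E\triangle E|=|\tilde E\triangle E|+|G^1\triangle E|\le\omega_n r^n+C_0|\sigma|<\delta$ as well as $|\overline E\triangle E|\le(C_0+1)|E\triangle\tilde E|$. Hence the continuity estimate above applies with $\widehat E=\overline E$, giving $|\mathcal{W}_p(\overline E)-\mathcal{W}_p(E)|\le C_{n,p}\alpha^{1-p}(C_0+1)|E\triangle\tilde E|$. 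On the perimeter side, $E$ and $\tilde E$ agree on $B(y_1,\eta)$ and $\overline E$ agrees with $\tilde E$ on a neighborhood of $\mathbb{R}^n\setminus B(y_1,\eta)$, so, splitting the perimeter across $\partial B(y_1,\eta)$ exactly as in Step three, $P(\overline E)=P(\tilde E)+(P(G^1;B(y_1,\eta))-P(E;B(y_1,\eta)))\le P(\tilde E)+C_0|\sigma|\le P(\tilde E)+C_0|E\triangle\tilde E|$. Testing $\mathcal{G}(E)\le\mathcal{G}(\overline E)$ and combining these two bounds gives $P(E)\le P(\overline E)+\lambda(\mathcal{W}_p(\overline E)-\mathcal{W}_p(E))\le P(\tilde E)+(C_0+\lambda C_{n,p}\alpha^{1-p}(C_0+1))|E\triangle\tilde E|$, which is the claim with $\Lambda:=C_0+\lambda C_{n,p}\alpha^{1-p}(C_0+1)$ and $r_0$ as above (cf. \eqref{almost perimeter minimality of E_m}).

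The only point that is not a mechanical transcription of Steps two and three — and the one I would be most careful about — is producing the two ``reservoir'' balls $B(y_i,\eta)$ directly from the minimizer $E$ rather than inheriting them from a limit set as in Step three. This requires checking that $\partial^\ast E$ carries positive $\mathcal{H}^{n-1}$-measure on two disjoint balls, which holds because $\mathcal{H}^{n-1}(\partial^\ast E)=P(E)>0$ and $\mathcal{H}^{n-1}\mres\partial^\ast E$ is non-atomic (so its support contains more than one point), and that $\eta$ and $r_0$ can be chosen so that \emph{every} ball $B(x,r_0)$ avoids at least one reservoir — this is precisely what makes the resulting inequality hold for all $x\in\mathbb{R}^n$. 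All the remaining ingredients (the volume identity for $\overline E$, the disjointness bookkeeping, the $L^1$-comparisons, and the perimeter split across the sphere) are identical to those in the proof of \Cref{existence theorem}, with every $1/m$ simply deleted.
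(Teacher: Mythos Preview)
Your proposal is correct and follows exactly the approach the paper indicates: the paper's proof is the single sentence ``the same proof applies to the minimizer $E$'', and you have carefully spelled out that repetition of Steps two and three with the $1/m$ terms removed. The one detail you flag and handle---producing the two reservoir balls $B(y_i,\eta)$ directly from $\partial^\ast E$ via non-atomicity of $\mathcal{H}^{n-1}\mres\partial^\ast E$ rather than from a limit set---is a point the paper leaves implicit, and your treatment of it is fine.
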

\begin{proof}
The analogous inequality for the elements of the minimizing sequence $\{E_m\}$ was derived in \eqref{almost perimeter minimality of E_m} with an added factor of $1/m$, and the same proof applies to the minimizer $E$. 
\end{proof}

\smallskip

\begin{remark}[Regularity of minimizers]
The classical theory of $(\Lambda,r_0)$-perimeter minimality implies that $\partial^* E \in C^{1,\gamma}$ for any $\gamma\in(0,1/2)$ and the Hausdorff dimension of $\partial E \setminus \partial^* E$ is at most $n-8$ (see e.g. \cite[Theorem 26.3]{Maggi}). {This regularity was also observed in \cite[Theorem 4.6]{Butt}.} Also, by \cite[Theorem 3.13]{Butt}, $F$ is a set of finite perimeter.
\end{remark}

\medskip

\begin{remark}
\label{r.frank}
Alternatively, one could attempt to demonstrate the existence of minimizers using the framework developed in \cite{FL15}. This would require proving that the binding inequality
    \[
        e(M) < e(M^\prime) + e(M-M^\prime)
    \]
holds for all $0<M^\prime<M$, where $e(M) = \inf \big\{ P(E) + \mathcal{W}_p(E) \colon |E|=M \big\}$.
\end{remark}

\bigskip

\appendix
\section{Proof of \Cref{volume fixing lemma}}
The argument is a straightforward modification of the case where there is one set $E$ \cite[Lemma 17.21]{Maggi}, as opposed to a sequence.
\begin{proof}[Proof of \Cref{volume fixing lemma}]
Since $\mathcal{H}^{n-1}(\partial^\ast E\cap A)>0$, we can find $T\in C_c^\infty(A;\mathbb{R}^n)$ with
\begin{equation}\notag
    \gamma := \int_E \Div T \,dx = \int_{\partial^\ast E} T \cdot \nu_{E} \, d\mathcal{H}^{n-1}>0.
\end{equation}
By the $L^1_\loc$-convergence of $E_m$ to $E$, for $m$ large enough, we have
\begin{equation}\label{bounds on boundary of Em}
    \frac{\gamma}{2} < \int_{E_m} \Div T \,dx= \int_{\partial^\ast E_m} T \cdot \nu_{E_m} \, d\mathcal{H}^{n-1} < 2\gamma.
\end{equation}
Let $\varphi_t(x)\colon \RR^n \times (-\delta,\delta)\to \RR^n$ be a one parameter family of diffeomorphisms with initial velocity $T$. By the first variation formulae for perimeter and volume (see e.g. \cite[Chapter 17]{Maggi}), there exists $\delta_0>0$ such that for all $|t| \leq \delta_0$,
\begin{gather}\label{first variation of perimeter}
    \left|P(\varphi_t(E_m);A) - P(E_m;A) \right| \leq 2|t|  P(E_m; A) \| \nabla T\|_{L^\infty},\\ \label{first variation of volume}
    |\varphi_t(E_m)\cap A| = |E_m\cap A| + t\int_{\partial^\ast E_m} T \cdot \nu_{E_m} \,d\mathcal{H}^{n-1}+O(t^2),
\end{gather}
where the decay rate in $t$ in the second equality depends on $T$ and is thus uniform in $m$. Also, by \eqref{bounds on boundary of Em} and \eqref{first variation of volume}, $|\varphi_t(E_m)\cap A|$ is strictly increasing on $[-\delta_0,\delta_0]$ with 
\begin{equation}\label{slope bound}
    \big| \, |\varphi_t(E_m)\cap A| - |\varphi_{t'}(E_m)\cap A| \, \big| \geq \frac{\gamma}{4}|t - t'|
\end{equation}
(after decreasing $\delta_0$ if necessary). Therefore, we have the inclusion
\begin{equation}\notag
    \left(-\frac{\delta_0 \gamma}{4},\frac{\delta_0 \gamma}{4}\right) \subset \big\{|\varphi_t(E_m)\cap A|-|E_m \cap A|\colon |t| \leq \delta_0 \big\}.
\end{equation}
So for all $|\sigma|<\sigma_0:=\delta_0\gamma/4$, there exists $t_m=t_m(\sigma)\in (-\delta_0,\delta_0)$ such that 
\begin{equation}\label{volume of Gm}
    |\varphi_t(E_m)\cap A| = |E_m\cap A| + \sigma.
\end{equation}
By \eqref{slope bound}, it must be the case that
\begin{equation}\label{tm bound}
    |t_m| < \frac{4\sigma}{\gamma}.
\end{equation}
Then defining $G_m = \varphi_{t_m}(E_m)$, it follows from \eqref{volume of Gm} and \eqref{first variation of perimeter}, \eqref{tm bound} that
\begin{align}\notag
    |G_m \cap A| &= |E_m\cap A| + \sigma, \quad|P(G_m;A)- P(E_m;A)| \leq C_0|\sigma|,
\end{align}
where $C_0$ depends on $M=\sup P(E_m;A)$, $A$, and $E$. The estimate
\begin{equation}\notag
|G_m \triangle E_m| \leq C_0|\sigma|
\end{equation}
can be found in \cite[Lemma 17.9]{Maggi} in the form 
\begin{equation}\notag
    | \varphi_{t_m}(E_m) \triangle E_m| \leq C(T)\, |t_m|\,P(E_m;A).
\end{equation}
Hence, the result is established.
\end{proof}

\section*{Acknowledgements}
We would like to thank Rupert L. Frank for his valuable comments. MN's research is supported by the NSF grant RTG-DMS 1840314. IT's research is partially supported by the Simons Collaboration Grant for Mathematicians No. 851065. RV acknowledges partial support from the AMS-Simons Travel Grant. 

\bibliographystyle{IEEEtranSA}
\bibliography{ref}

\end{document}